\documentclass[letterpaper,journal,onecolumn,12pt]{IEEEtran}

\renewcommand{\footnoterule}{%
  \kern-3pt
  \hrule width 0.4\columnwidth
  \kern 2.6pt
}

\usepackage[utf8]{inputenc} 
\usepackage[T1]{fontenc}
\usepackage{verbatim}

\usepackage[backend=bibtex, style = numeric, doi = false, url = false, isbn = false, maxbibnames = 6]{biblatex}
\bibliography{references.bib}
\renewbibmacro{in:}{}      
\newbibmacro{string+doi}[1]{\iffieldundef{doi}{#1}{\href{https://dx.doi.org/\thefield{doi}}{#1}}}
\DeclareFieldFormat{title}{\usebibmacro{string+doi}{\mkbibemph{#1}}}
\DeclareFieldFormat[article]{title}{\usebibmacro{string+doi}{\mkbibquote{#1}}}
\DeclareFieldFormat[incollection]{title}{\usebibmacro{string+doi}{\mkbibquote{#1}}}                   
\DeclareFieldFormat[inproceedings]{title}{\usebibmacro{string+doi}{\mkbibquote{#1}}}     

\usepackage[colorlinks=true,linkcolor=blue,urlcolor=blue,citecolor=blue,anchorcolor=green,pdfusetitle]{hyperref}

\usepackage[cmex10]{amsmath}  
\usepackage{amsfonts}
\usepackage{amssymb,amsthm,mathtools}

\usepackage{cleveref}

\usepackage{mleftright}       
\mleftright                   

\usepackage{graphicx}         
\usepackage{booktabs}         

\usepackage{aliascnt}

\newtheorem{thm}{Theorem}

\newaliascnt{prop}{thm}
\newtheorem{prop}[prop]{Proposition}
\aliascntresetthe{prop}

\newaliascnt{lem}{thm}
\newtheorem{lem}[lem]{Lemma}
\aliascntresetthe{lem}

\newaliascnt{cor}{thm}
\newtheorem{cor}[cor]{Corollary}
\aliascntresetthe{cor}

\theoremstyle{definition}

\newaliascnt{rem}{thm}
\newtheorem{rem}[rem]{Remark}
\aliascntresetthe{rem}

\newaliascnt{defn}{thm}
\newtheorem{defn}[defn]{Definition}
\aliascntresetthe{defn}

\newaliascnt{conj}{thm}

\aliascntresetthe{conj}

\newaliascnt{ex}{thm}

\aliascntresetthe{ex}

\usepackage{cleveref}

\crefname{thm}{theorem}{theorems}
\Crefname{thm}{Theorem}{Theorems}

\crefname{prop}{proposition}{propositions}
\Crefname{prop}{Proposition}{Propositions}

\crefname{lem}{lemma}{lemmas}
\Crefname{lem}{Lemma}{Lemmas}

\crefname{cor}{corollary}{corollaries}
\Crefname{cor}{Corollary}{Corollaries}

\crefname{rem}{remark}{remarks}
\Crefname{rem}{Remark}{Remarks}

\crefname{defn}{definition}{definitions}
\Crefname{defn}{Definition}{Definitions}

\crefname{conj}{conjecture}{conjectures}
\Crefname{conj}{Conjecture}{Conjectures}

\crefname{ex}{example}{examples}
\Crefname{ex}{Example}{Examples}

\usepackage{braket}
\usepackage{mathtools}
\usepackage{dsfont}
\usepackage{xcolor}

\newcommand{\cE}{\mathcal{E}}

\newcommand{\bC}{\mathbb{C}}
\newcommand{\bR}{\mathbb{R}}
\newcommand{\bP}{\mathbb{P}}
\newcommand{\bM}{\mathbb{M}}
\newcommand{\mc}{\mathcal}
\newcommand{\mbb}{\mathbb}

\DeclareMathOperator{\bc}{bc}
\DeclareMathOperator{\spec}{Spec}
\DeclareMathOperator{\GL}{GL}
\DeclareMathOperator{\Herm}{Herm}
\DeclareMathOperator{\tr}{tr}
\DeclareMathOperator{\id}{id}
\DeclareMathOperator{\im}{im} 

\DeclareMathOperator{\Span}{Span}

\DeclareMathOperator{\Gr}{Gr}

\DeclareMathOperator*{\argmin}{arg\,min}
\newcommand{\one}{\mbb{I}}

\definecolor{cool_green}{rgb}{0.0, 0.5, 0.0}

\usepackage[affil-it]{authblk}

\begin{document}
\title{From Majorization to Horn and Unitary-Orbit Inequalities}
\author{Juntai Zhou%
\thanks{\href{mailto:juntaiz2@illinois.edu}{juntaiz2@illinois.edu}}}
\affil{\small Department of Mathematics, University of Illinois Urbana-Champaign}
\date{}
\maketitle

\begin{abstract}
    Majorization inequalities encode spectral comparisons through leading partial sums, whereas complete Horn inequalities capture all admissible selected-eigenvalue constraints associated with sums of Hermitian matrices [Fulton, 2000]. We develop a general framework for lifting majorization inequalities to this complete Horn level, which is equivalent to unitary-orbit inequalities. The main ingredients are a generalized Hersch--Zwahlen variational formula for coordinatewise monotone functions of selected eigenvalues and a Schubert-geometric lifting principle that reduces the desired Horn inequalities to the concavity or convexity of suitable scalarizer functions on matrix compressions. As examples we apply the framework to three families of operator inequalities. First, we obtain a complete Horn extension of the weak-majorization inequality for convex functions established by Aujla and Silva [Aujla and Silva, 2003]. Second, we lift the concavity and convexity results for two-variable positive-power functions due to Carlen, Frank, and Lieb and Zhang [Carlen, Frank, and Lieb, 2016; Zhang, 2020]. Third, we establish multiplicative Horn inequalities for multivariable geodesic means, including the weighted Karcher mean and the two-variable geometric mean, extending determinant-level inequalities of Bourin and Hiai [Bourin and Hiai, 2014]. The resulting statements yield unitary-orbit inequalities that generalize familiar weak-majorization, trace, and determinant inequalities.
\end{abstract}

\section{Introduction}
Many central questions in quantum information theory are spectral:
the eigenvalues of a density operator determine its mixedness and
entropies~\cite{Watrous2018,Wehrl1978},
Schmidt coefficients determine bipartite pure-state
entanglement~\cite{Vidal2000},
and trace functionals of positive operators quantify state
distinguishability~\cite{AudenaertEtAl2007,FuchsCaves1995,ZhouEtAl2025GU}. Majorization gives these data an operational order. If $\lambda(\rho)\prec\lambda(\sigma)$, then $\rho$ is spectrally more mixed than $\sigma$ and every Schur-concave entropy is at least as large; Nielsen's theorem likewise characterizes deterministic bipartite pure-state transformations under local operations and classical communication by majorization of the Schmidt probability vectors \cite{MarshallOlkinArnold2011, Nielsen1999}. Mathematically, majorization compares decreasing partial sums, weak majorization drops equality of the total sums, and log-majorization uses partial products \cite{HardyLittlewoodPolya1952,MarshallOlkinArnold2011}. Applied to eigenvalue or singular-value lists, these orders control leading eigenvalues, traces, determinants, Ky Fan sums, and unitarily invariant norms even when a direct L\"owner-order comparison of the original matrices is unavailable \cite{Bhatia1997}. This flexibility is important in quantum theory: trace-convexity results underpin strong subadditivity and data processing, while log-majorization controls noncommutative operator means \cite{AudenaertDatta2015,Hiai2024,LiebRuskai1973StrongSubadditivity}. Majorization inequalities therefore provide a common framework for proving monotonicity under mixing, quantum channels, and resource conversion.

Unitary-orbit inequalities sharpen spectral comparisons into operator statements that remain useful under subsequent quantum processing. Thompson proved that \cite{Thompson1976ConvexConcaveSingularValues}, for arbitrary matrices $X$ and $Y$, there exist unitaries $U,V$ such that
\begin{equation}
|X+Y|\leq U|X|U^\dagger+V|Y|V^\dagger.
\end{equation}
For an operator convex function $f$, Jensen's operator inequality gives $f(\Phi(A)) \leq \Phi(f(A))$ for a unital positive map $\Phi$ under the usual hypotheses \cite{HansenPedersen2003Jensen}, and Aujla--Bourin and Bourin--Lee established Jensen-type counterparts of Thompson: for a unital positive map $\Phi$ and a monotone convex function $f$, one has $f(\Phi(A))\leq U\Phi(f(A))U^\dagger$ for some unitary $U$; for a general convex $f$, one still has the inequality $f(\Phi(A))\leq\bigl(U\Phi(f(A))U^\dagger+V\Phi(f(A))V^\dagger\bigr)/2$ for suitable unitaries $U,V$ \cite{AujlaBourin2007EigenvalueConvexLogConvex,BourinLee2012UnitaryOrbits}. These results replace the restrictive assumption of operator convexity by scalar convexity together with unitary changes of basis. The gain over weak majorization is operational: weak majorization yields Ky Fan and unitarily invariant norm bounds, but it does not compare the expectation of a prescribed observable or provide an operator inequality that can be passed through another quantum operation. By contrast, a bound of the form $\mathcal{T}(\sum_t\alpha_tz_t)\leq\sum_t\alpha_tU_t\mathcal{T}(z_t)U_t^\dagger$ is preserved by every positive map $\Psi$; after applying $\Psi$, every POVM effect $0\leq M\leq\one$ gives the corresponding expectation-value inequality. Since the Heisenberg adjoint $\mathcal{N}^\dagger$ of a quantum channel is unital and completely positive, the Jensen-type examples apply directly to nonlinear observables such as $f(\mathcal{N}^\dagger(H))$. A unitary-orbit lift therefore retains the full matrix before trace, entropy, or divergence scalarization and may furnish an operator-level precursor or refinement of an entropic inequality. The structural question is then how to recognize, from spectral information, when such a lift exists.

Horn inequalities answer this question exactly: For Hermitian matrices $C,A_1,\ldots,A_n$, the existence of unitaries $U_1,\ldots,U_n$ such that
\begin{equation}
C\leq \sum_{t=1}^{n}U_tA_tU_t^\dagger
\end{equation}
is equivalent to the complete family of inequalities
\begin{equation}
\sum_{k\in K}\lambda_k^\downarrow(C)
\leq
\sum_{t=1}^{n}\sum_{i\in I_t}\lambda_i^\downarrow(A_t)
\end{equation}
for all Horn tuples $(I_1,\ldots,I_n;K)$ \cite{Fulton2000EigenvaluesInvariantFactors,Fulton2000Eigenvalues,MR4529925,Horn1962,Klyachko1998,KnutsonTao1999}; see \cref{theorem: Fulton's Horn inequalities}. The initial-index cases are precisely the Ky Fan inequalities visible to weak majorization, whereas the complete Horn system controls arbitrary admissible selections of eigenvalues. Thus, a complete Horn lift does more than refine a spectral inequality: it certifies a direct L\"owner-order comparison on unitary orbits. This characterization provides the mechanism needed to promote majorization bounds to operator inequalities suitable for quantum-information applications.

The main contribution of this article is a general mechanism for lifting majorization inequalities to the complete Horn system and hence, in the additive setting, to unitary-orbit inequalities. The framework consists of a geometric reduction followed by a family-specific analytic verification. For the geometric reduction, we prove a generalized Hersch--Zwahlen variational formula (\cref{theorem: generalized Hersch-Zwahlen}) that represents coordinatewise monotone functions of arbitrary selected eigenvalue sublists as extrema of scalarizers over Schubert-constrained compressions, and combine it with the intersection property of Horn tuples to establish a lifting principle (\cref{thm: lifting principle}); this reduces each Horn-level extension to the family-specific analytic task of verifying the appropriate concavity, convexity, or mean inequality for the compressed scalarizer (\cref{prop:unified lifting}). As examples we apply this framework to three families that are central to the preceding quantum motivation. For the Aujla--Silva weak-majorization Jensen inequality, it yields complete Horn and unitary-orbit Jensen bounds for nonlinear mixtures under ordinary scalar convexity \cite{Aujla2003WeakMajorization}. For the Carlen--Frank--Lieb--Zhang positive-power family, it lifts the trace concavity and convexity results underlying $\alpha$-$z$ R\'{e}nyi data processing to selected-eigenvalue and unitary-orbit inequalities for the corresponding sandwiched-power matrices \cite{AudenaertDatta2015,CarlenFrankLieb2016SomeOperatorTraceConvexity,CarlenLieb2008MinkowskiTraceII,Lieb1973,Zhang2020WYDCarlenFrankLieb}. For geometric and multivariable geodesic means, it gives multiplicative Horn refinements of log-majorization and, for Karcher means---including the weighted geometric mean---unitary-orbit inequalities after taking logarithms \cite{AndoHiai1994,Hiai2024,KuboAndo1980Means}. These results preserve matrix-level information that is lost in trace, norm, determinant, or entropy scalarizations and may therefore serve as operator-level antecedents or refinements of familiar quantum-information inequalities.

The remainder of the article is organized as follows. Section~II collects the preliminaries for the lifting arguments, including the complete Horn inequalities and the analytic inputs needed for the relevant concavity and convexity statements. Section~III develops the lifting framework: the generalized Hersch--Zwahlen variational formula in \cref{theorem: generalized Hersch-Zwahlen} expresses coordinatewise monotone functions of selected eigenvalues through scalarizers of matrix compressions, and the lifting principle in \cref{prop:unified lifting} reduces the Horn-level problem to establishing concavity or convexity of the corresponding scalarizer on every compressed subspace. Section~IV applies this framework to the three families just described, yielding the Horn-level extensions in \cref{theorem: Aujla-Silva Horn}, \cref{theorem: Two-variable positive-map powers}, and \cref{theorem: Horn inequalities for geodesic means} together with their corollaries.

\section{Preliminaries}
Let $\bM_d^+$ denote the set of $d\times d$ positive semidefinite matrices, and $\bP_d$ denote the set of $d\times d$ positive definite matrices. 

\subsection{Horn inequalities}
Fix $1\leq r\leq d$. For a subset
$I=\{i_1<...<i_r\}\subset[d]:=\{1,\dots,d\}$, define the dual subset
\begin{align}
    I^\vee:=\{d+1-i_r<...<d+1-i_1\}\subset[d].
\end{align}
If $E_\bullet=(E_1\subset\cdots\subset E_n=\bC^n)$ is a complete flag, define the Schubert variety
\begin{align}
    \Omega_I(E_\bullet):=\{L\in\Gr(r,n):\dim(L\cap E_{i_q})\ge q\text{ for }q=1,\dots,r\}.
\end{align}

\begin{defn}
An $(n+1)$-tuple $(I_1,\dots,I_n;K)$ of $r$-subsets of $[d]$ is called a \emph{Horn tuple} of rank $r$ in dimension $d$ if
\begin{align}
    \Omega_{I_1^\vee}(E_\bullet^{(1)})\cap\cdots\cap \Omega_{I_n^\vee}(E_\bullet^{(n)})\cap \Omega_K(G_\bullet)\neq\varnothing
\end{align}
for every choice of complete flags $(E_\bullet^{(1)},\dots,E_\bullet^{(n)},G_\bullet)$ in $\bC^d$.
\end{defn}

The Horn inequalities provide an equivalence between Hermitian matrix inequalities in Lowner order and eigenvalue inequalities over Horn tuples:

\begin{thm}\cite{Fulton2000Eigenvalues} \label{theorem: Fulton's Horn inequalities}
Write $\alpha^{(t)}=(\alpha^{(t)}_1\geq \cdots \geq \alpha^{(t)}_d)$ for $1\leq t\leq n$, and $\gamma=(\gamma_1\geq \cdots \geq \gamma_d)$. Then the following statements are equivalent:\\
(1) There exist Hermitian $d\times d$ matrices $A_1,\dots,A_n,C$ with eigenvalues
$\alpha^{(1)},\dots,\alpha^{(n)},\gamma$, respectively, such that
\begin{align}
    C\leq A_1+\cdots +A_n.
\end{align}
(2) For every Horn tuple $(I_1,\cdots,I_n;K)$, one has
\begin{align}
    \sum_{k\in K}\gamma_k \leq \sum_{t=1}^n \sum_{i_t\in I_t} \alpha_{i_t}^{(t)}.
\end{align}
\end{thm}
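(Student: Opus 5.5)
Both directions go through the reduction $C\le \sum_t A_t$ if and only if there is a Hermitian $B=\sum_t A_t$ with $\lambda^\downarrow(C)\le\lambda^\downarrow(B)$ coordinatewise. By Weyl monotonicity, $C\le B$ forces $\gamma_k\le\lambda_k(B)$ for all $k$. Conversely, if $\gamma\le\lambda(B)$ coordinatewise, we may diagonalize $B$ in a basis $\{v_k\}$ and set $C':=\sum_k\gamma_k v_kv_k^\dagger\le B$; then $C'$ has spectrum $\gamma$. Statement (1) is therefore equivalent to (1'): there exist Hermitian $A_t$ with spectra $\alpha^{(t)}$ whose sum $B$ satisfies $\beta:=\lambda(B)\ge\gamma$ coordinatewise.

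For (1)$\Rightarrow$(2), I would prove directly the Schubert-calculus inequality
\begin{align}
\sum_{k\in K}\lambda_k(B)\le\sum_t\sum_{i\in I_t}\lambda_i(A_t)
\end{align}
for every Horn tuple. Let $E^{(t)}_\bullet$ be the flag spanned by eigenvectors of $A_t$ in \emph{increasing} eigenvalue order, and let $G_\bullet$ be the flag of eigenvectors of $B$ in decreasing order. By the Horn-tuple definition, there is an $L$ in all the Schubert varieties. The standard Hersch--Zwahlen / Wielandt estimates then give
\begin{align}
\tr(P_L B)\ge\sum_{k\in K}\lambda_k(B),
\end{align}
since $L\in\Omega_K(G_\bullet)$. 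They also give $\tr(P_LA_t)\le\sum_{i\in I_t}\lambda_i(A_t)$, because $L\in\Omega_{I_t^\vee}$ of the increasing flag. This is checked by choosing an orthonormal basis of $L$ adapted to the nested intersections $L\cap E_{j}$ and bounding each Rayleigh quotient. Summing via $\tr P_LB=\sum_t\tr P_LA_t$ yields the inequality. Combining it with $\gamma_k\le\lambda_k(B)$ gives (2).

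For (2)$\Rightarrow$(1), I would reduce to the equality version of Horn's problem by the Klyachko / Knutson--Tao saturation theorem. The existence of Hermitian $A_t$ and $B$ with prescribed spectra and $B=\sum A_t$ is equivalent to trace equality together with the inequalities for all Horn tuples, in the form stated by Fulton. The task is to find $\beta\ge\gamma$ coordinatewise, with $\beta$ decreasing and $\sum\beta=\sum_t\sum\alpha^{(t)}$, that satisfies all Horn inequalities for $(\alpha^{(1)},\dots,\alpha^{(n)};\beta)$. I would set this up as a polyhedral problem. The set $P$ of admissible $\beta$ (the Horn polytope, nonempty, e.g.\ it contains the spectrum of $\sum_t\mathrm{diag}(\alpha^{(t)})$) is convex, and we want $P\cap(\gamma+\mathbb{R}^d_{\ge0})\neq\varnothing$. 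I would proceed either by Farkas duality, or more concretely by induction: increase $\gamma$ one coordinate at a time (starting with $\gamma_1$) until the total-sum constraint is met, preserving every Horn inequality. At each step, the obstruction is a tight inequality indexed by some $K$. Using the fact that Horn tuples are closed under the operations that shift $K$ (if $(I;K)$ is Horn and $k\in K$, $k-1\notin K$, then replacing $k$ by $k-1$ keeps a Horn tuple, since $\Omega_K$ grows), the obstruction is pushed to a coordinate not yet saturated.

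The main obstacle is this converse direction. It genuinely needs the deep saturation/Horn theorem, and it needs a careful argument that the increase of $\gamma$ to a $\beta$ of the right total sum can be done without violating some Horn inequality, together with the rank-$d$ tuple $K=[d]$ that encodes the trace inequality. I would first try the Farkas formulation and use the monotonicity of Horn tuples under lowering entries of $K$ to rule out infeasibility certificates.
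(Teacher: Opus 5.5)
The paper gives no proof of this theorem; it quotes it from Fulton. Your reduction of (1) to (1') is correct. So is your proof of (1)$\Rightarrow$(2): it is the same flag-intersection argument that drives \cref{thm: lifting principle}. Choose orthonormal $w_q\in L\cap G_{k_q}$ with $w_q\perp w_1,\dots,w_{q-1}$; the Rayleigh-quotient bounds then add up to the trace bound, and the same works for each $A_t$ with its increasing flag.

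The gap is the converse. After Klyachko--Knutson--Tao, you still must show that a decreasing $\gamma$ satisfying all Horn inequalities lies coordinatewise below some point $\beta$ of the Horn polytope $P$. That is the entire content of the ``majorized'' form of Horn's theorem (Friedland, Fulton), and it does not follow formally from convexity of $P$. In your Farkas formulation, (2) already excludes any infeasibility certificate whose multiplier on $\sum_k\beta_k=S$ is nonnegative: use the rank-$d$ tuple and $\gamma_k\ge\gamma_{k+1}$. So everything hinges on certificates that use the trace equality with a \emph{negative} multiplier. Excluding those requires knowing how different Horn inequalities combine, and your proposal supplies nothing of that kind.

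Moreover, the one structural property you intend to use is false. Replacing $k_q\in K$ by $k_q-1$ replaces $\dim(L\cap G_{k_q})\ge q$ by the stronger condition $\dim(L\cap G_{k_q-1})\ge q$, so $\Omega_K$ shrinks rather than grows. Horn tuples are closed under \emph{raising} entries of $K$ (and lowering entries of the $I_t$), and those moves only produce weaker inequalities. For instance, with $n=1$ and $d=2$, the tuple $(\{2\};\{2\})$ is Horn but $(\{2\};\{1\})$ is not: it would give $\gamma_1\le\alpha_2$, which fails for $C=A=\mathrm{diag}(1,0)$.

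Even with the correct monotonicity, your greedy raising of $\gamma$ only reaches $\sum_k\beta_k=S$ if the following holds: whenever every coordinate is blocked by a tight inequality, the union of the blocking index sets is again tight. That would follow from, e.g., submodularity of $K\mapsto\min\{\sum_t\sum_{i\in I_t}\alpha^{(t)}_i : (I_1,\dots,I_n;K)\text{ Horn}\}$. This is a new statement about the whole family of Horn tuples, not a consequence of monotonicity of Schubert varieties. Without it, your argument allows the process to stall with $\sum_k\beta_k<S$. So (2)$\Rightarrow$(1) must either be quoted, as the paper does, or be backed by a genuine structural argument of this kind.
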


\subsection{Analytic inputs for power family}\label{sec:analytic}
We start with Zhang's theorem on the joint concavity / convexity regions of $\Psi_{p,q,s}^K(A,B)$:

\begin{thm}\cite[Theorem 1.1]{Zhang2020WYDCarlenFrankLieb} \label{thm: Zhang's concavity}
    For a fixed invertible matrix \(K\), define
\begin{align}
\Psi_{p,q,s}^K(A,B)
:=\tr\left(B^{q/2}K^*A^p K B^{q/2}\right)^s.
\end{align}
The sharp joint concavity / convexity regions of this function, symmetrically in \(p\) and \(q\), are given by:\\
(1) \(\Psi_{p,q,s}^K\) is jointly concave if \(0\le p,q\le1\) and \(0<s\le1/(p+q)\).\\
(2) \(\Psi_{p,q,s}^K\) is jointly convex if \(-1\le p,q\le0\) and \(s>0\).\\
(3)\(\Psi_{p,q,s}^K\) is jointly convex in the mixed region with one exponent in \([1,2]\), the other in \([-1,0]\), and \(s\ge1/(p+q)\), excluding the endpoint where \(p+q=0\).
\end{thm}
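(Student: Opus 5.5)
Since this is Zhang's theorem~\cite[Theorem~1.1]{Zhang2020WYDCarlenFrankLieb}, the plan follows the variational strategy of Carlen--Frank--Lieb and Zhang. The idea is to write $\Psi^K_{p,q,s}$ as a supremum or infimum of functions whose joint concavity or convexity in $(A,B)$ is already classical. Those classical inputs are Lieb's concavity theorem, Ando's convexity theorem, and the operator concavity or convexity of $t\mapsto t^r$ for $r\in[-1,0]\cup[0,1]\cup[1,2]$. For part (1) only the boundary case $s=1/(p+q)$ is the heart of the matter. Smaller $s$ is obtained by a second variational step that trades the excess exponent for a concave, monotone scalar power.

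For (1), I set $X=B^{q/2}K^*A^pKB^{q/2}$ and $r=s(p+q)\le 1$. The key tool is a Hölder/Young-type variational identity. In the spirit of Zhang's lemma, for suitable exponents one has
\[
\tr\bigl(Y^*Y\bigr)^{s}=\max_{Z\ge0}\Bigl\{\,\tr\bigl(Z^{*}Y^{*}YZ\bigr)^{\theta}\text{-type term}-c\,\tr Z^{\kappa}\Bigr\},
\]
with $Y=A^{p/2}KB^{q/2}$. The maximizer is chosen so that the $B$-dependence and the $A$-dependence separate into $\tr(K^*A^{p'}K B^{q'}\cdots)$-type expressions with $p'+q'\le1$. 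Each such expression is jointly concave by Lieb's theorem, or by Ando's theorem via the operator concavity of $t^{p'}$ and $t^{q'}$. A supremum over $Z$ of jointly concave functions minus a $Z$-only term is jointly concave, provided the joint function of $(A,B,Z)$ is concave. That joint concavity is the content to be checked.

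For (2) and (3) I use the dual, infimum form. When $s\ge$ the relevant threshold, $x\mapsto x^s$ is convex, so the Legendre-type identity becomes $\tr X^s=\max_Z\{s\tr(XZ^{s-1})\text{-variant}\}$ or a min-form, depending on the sign. In the region $-1\le p,q\le0$, the map $(A,B)\mapsto\tr(K^*A^pKB^q Z)$ is jointly convex: Ando's convexity covers negative exponents in $[-1,0]$. In the mixed region, one exponent lies in $[1,2]$ and the other in $[-1,0]$, and Ando's convexity for $p-q\ge1$-type pairs (equivalently the convexity of $(A,B)\mapsto\tr K^*A^pKB^{q}$ for $1\le p\le2$, $-1\le q\le0$) supplies the convex inner function. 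The constraint $s\ge1/(p+q)$ is exactly what makes the outer exponent compatible. The endpoint $p+q=0$ is excluded because the homogeneity degree vanishes and the variational normalization degenerates.

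The main obstacle is step (1): producing the right variational formula so that, after optimizing, the remaining function is \emph{jointly} concave in $(A,B)$ together with the auxiliary variable. Zhang's trick is to introduce the auxiliary matrix in a product form, splitting the $s$-power via matrix Hölder into factors that involve $A$ and $B$ with exponents summing to at most one. I would try that first, then verify the joint concavity of the resulting three-variable function by reducing it to Lieb's concavity theorem applied with $K$ replaced by $KZ$-type products.
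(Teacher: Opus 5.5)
The paper gives no proof of this theorem; it imports it from Zhang. Zhang's argument rests on the three-exponent identity recorded here as \eqref{eq:zhang-var}, together with its infimum companion. Measured against that argument, your proposal has genuine gaps in all three parts.

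\textbf{Part (1).} Here the variational principle points the wrong way. Your formula $\tr(Y^*Y)^s=\max_{Z\ge0}\{c_1\tr(Z^*Y^*YZ)^{\theta}-c\tr Z^{\kappa}\}$ is the max form of Zhang's identity applied to the unsplit product $Y=A^{p/2}KB^{q/2}$. A supremum yields concavity only if the integrand is jointly concave in $(A,B,Z)$, and that fails already in $Z$. Choose $A$ so that $Y^*Y$ is close to $mm^*$ with $m=e_1+e_2$. Then the first term is close to $\|Zm\|^{2\theta}$, which violates midpoint concavity at $Z=\lambda e_1e_1^*,\lambda e_2e_2^*$. For small $\lambda$ the penalty term, homogeneous of degree $\kappa>2\theta$, cannot compensate.

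The missing idea is the infimum form
\[
\tr|XY|^{r_0}=\min_{Z}\Bigl\{\tfrac{r_0}{r_1}\tr|XZ|^{r_1}+\tfrac{r_0}{r_2}\tr|Z^{-1}Y|^{r_2}\Bigr\},\qquad \tfrac1{r_0}=\tfrac1{r_1}+\tfrac1{r_2},
\]
used with $X=A^{p/2}$, $Y=KB^{q/2}$, $r_0=2s$. It exhibits $\Psi^K_{p,q,s}$ as an infimum over $Z$ of $\tfrac{r_0}{r_1}\tr(Z^*A^pZ)^{r_1/2}+\tfrac{r_0}{r_2}\tr(Z^{-1}KB^qK^*Z^{-*})^{r_2/2}$, where $A$ and $B$ sit in different summands. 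Concavity is then needed only in one variable at a time and only for fixed $Z$. No two-variable theorem such as Lieb's or Ando's is needed at this step. Each summand is concave by the one-variable concavity of $A\mapsto\tr(C^*A^pC)^t$ for $0\le p\le1$, $0<t\le1/p$ (Epstein plus operator concavity of powers). Admissible $r_1\le2/p$, $r_2\le2/q$ exist exactly when $s\le1/(p+q)$, so the whole range is covered at once. Your reduction of $s<1/(p+q)$ to the boundary ``by a scalar power'' is never specified. It cannot work at the trace level, because $\tr X^s$ is not a function of $\tr X^{1/(p+q)}$.

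\textbf{Parts (2) and (3).} A Legendre transform in the outer power alone cannot work, for three reasons.

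First, it requires $s\ge1$. This excludes all $0<s<1$ in (2), and the range $1/(p+q)\le s<1$ in (3) when $p+q>1$.

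Second, the inner functional it produces is $\tr(K^*A^pK\,B^{q/2}WB^{q/2})$, not $\tr(K^*A^pKB^qZ)$. Ando's theorem does not cover it, and it is not convex in $B$ in general.

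Third, in (3) with $0<p+q<1$ (e.g.\ $p=1$, $q=-\tfrac12$, $s\ge2$), it is nonnegative and positively homogeneous of degree $p+q\in(0,1)$, so it cannot be jointly convex at all. For the same reason, your claim that $\tr K^*A^pKB^q$ is jointly convex on all of $[1,2]\times[-1,0]$ is false: Ando's mixed region requires $p+q\ge1$.

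The working route is the max form \eqref{eq:zhang-var} with the same split $X=A^{p/2}$, $Y=KB^{q/2}$ and $r_1=2s$. For fixed $Z$ the bracket is $\tfrac{r_1}{r_0}\tr(Z^*A^pZ)^{r_0/2}-\tfrac{r_1}{r_2}\tr(Z^*K^{-*}B^{-q}K^{-1}Z)^{r_2/2}$, again with the variables separated. The second term is convex once $r_2\le2/|q|$. When $-1\le p\le0$, the first term is convex for every $r_0>0$; this one-variable fact itself follows from \eqref{eq:zhang-var}. That settles (2) for all $s>0$. When $1\le p\le2$, the first term is convex for $r_0\ge2/p$ (Carlen--Lieb). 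The constraint $\tfrac1{2s}=\tfrac1{r_0}-\tfrac1{r_2}\le\tfrac{p+q}2$ is then exactly $s\ge1/(p+q)$, and it is infeasible at $p+q=0$. This is the same bookkeeping the paper carries out in \cref{lem: PL} and \cref{lem:engine II}.

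Finally, your proposal does not address the sharpness assertion, which needs separate necessity counterexamples.
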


We will also use the variational identity
\begin{equation}\label{eq:zhang-var}
\tr|XY|^{r_1}
=
\max_Z\left\{
\frac{r_1}{r_0}\tr|XZ|^{r_0}
-
\frac{r_1}{r_2}\tr|Y^{-1}Z|^{r_2}
\right\},
\qquad
\frac1{r_0}=\frac1{r_1}+\frac1{r_2},
\end{equation}
for positive \(r_0,r_1,r_2\) \cite[(3.2)]{Zhang2020WYDCarlenFrankLieb}. In the rest of this subsection, we prove some analytic inputs which will be used for applying the lifting framework to the power family in \cref{theorem: Two-variable positive-map powers}. 

\begin{lem} \label{lem: order}
Given strictly positive map $\Phi$, \(0\le p\le q\le1\), and matrix $K$.\\
(1) $X\mapsto \Phi(X^{-p})^{-1}$ is operator increasing and operator concave \cite[Lemma 2.4]{Hiai2012ConcavityMatrixTraceNorm}.\\
(2) $g_{p,q}(X):=\tr\bigl[K^*\Phi(X^{-p})^{-1}K\bigr]^{1/q}$ is concave.
\end{lem}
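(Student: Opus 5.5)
We will deduce part (2) from part (1) together with the variational identity \eqref{eq:zhang-var}; part (1) is \cite[Lemma 2.4]{Hiai2012ConcavityMatrixTraceNorm}. The plan is to write $g_{p,q}$ as a pointwise supremum of functions of $X$ that are each concave. Then $g_{p,q}$ is concave, with no need to compose concavity with the (convex) power $t\mapsto t^{1/q}$.

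Put $H(X):=\Phi(X^{-p})^{-1}\in\bP_d$, using strict positivity of $\Phi$. Since $[K^*HK]^{1/q}=|H^{1/2}K|^{2/q}$, we have
\begin{equation*}
g_{p,q}(X)=\tr\bigl|K^*H(X)^{1/2}\bigr|^{r_1},\qquad r_1:=2/q\ge 2 .
\end{equation*}
Apply \eqref{eq:zhang-var} with the factors $K^*$ (in place of $X$) and $H(X)^{1/2}$ (in place of $Y$). This requires choosing $r_0,r_2>0$ with $1/r_0=1/r_1+1/r_2$, which gives
\begin{equation*}
g_{p,q}(X)=\max_Z\Bigl\{\tfrac{r_1}{r_0}\tr|K^*Z|^{r_0}-\tfrac{r_1}{r_2}\tr\bigl|H(X)^{-1/2}Z\bigr|^{r_2}\Bigr\}.
\end{equation*}
The first term does not depend on $X$. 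For the second term,
\begin{equation*}
\tr\bigl|H(X)^{-1/2}Z\bigr|^{r_2}=\tr\bigl(Z^*\Phi(X^{-p})Z\bigr)^{r_2/2}.
\end{equation*}

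The key step is to show that $X\mapsto \tr\bigl(Z^*\Phi(X^{-p})Z\bigr)^{r_2/2}$ is convex for every fixed $Z$. We argue this as follows.
\begin{itemize}
\item For $0\le p\le 1$, the map $X\mapsto X^{-p}$ is operator convex on $\bP_d$.
\item $\Phi$ is positive and linear, and congruence by $Z$ preserves the L\"owner order, so $X\mapsto Z^*\Phi(X^{-p})Z$ is operator convex.
\item If $r_2\ge 2$, then $A\mapsto \tr A^{r_2/2}$ is convex and monotone increasing on $\bM_d^+$.
\item Composing a convex increasing function with an operator convex map gives a convex function of $X$.
\end{itemize}
Each function inside the maximum is then a constant minus a nonnegative multiple of a convex function, hence concave in $X$. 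A pointwise supremum of concave functions is concave, which gives (2). Equivalently, concavity of $H$ from part (1) enters through convexity of $H^{-1}=\Phi(X^{-p})$.

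The main obstacle is bookkeeping the exponents so that \eqref{eq:zhang-var} applies with $r_2\ge 2$. We need $\tfrac1{r_2}=\tfrac1{r_0}-\tfrac q2\in(0,\tfrac12]$, that is, $r_0$ with $\tfrac q2<\tfrac1{r_0}\le\tfrac{1+q}{2}$. Such an $r_0$ exists, for instance $r_0=2/(1+q)$, which gives $r_2=2$ exactly. A further point to check is that the maximum in \eqref{eq:zhang-var} is attained, or at least is a supremum, over all matrices $Z$ uniformly in $X$; it is, since the formula holds for every invertible factor $Y=H(X)^{1/2}$. If the argument seems not to use $p\le q$, we will re-check whether the hypothesis is needed only for the later application, through the homogeneity degree $p/q\le 1$ of $g_{p,q}$. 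We do not expect it to be needed here.
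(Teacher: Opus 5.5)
Your argument fails at the step ``a pointwise supremum of concave functions is concave.'' That claim is false. A supremum of concave (even affine) functions is typically convex, e.g.\ $|x|=\max\{x,-x\}$; it is only \emph{infima} of concave functions that are automatically concave. The identity \eqref{eq:zhang-var} is a maximum (a matrix form of Young's inequality), so it can only certify \emph{convexity}. This is exactly how the paper uses it in \cref{lem: PL} and \cref{lem:engine II}, where every function inside the maximum is convex. Your check that $X\mapsto\frac{r_1}{r_0}\tr|K^*Z|^{r_0}-\frac{r_1}{r_2}\tr(Z^*\Phi(X^{-p})Z)^{r_2/2}$ is concave for each fixed $Z$ is correct, but it says nothing about the maximum over $Z$.

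The warning sign you noticed is decisive: your argument never uses $p\le q$, yet the statement is false without it. Take $\Phi=\id$, $K=\one$, $p=1$, $q=1/2$. Then $g_{p,q}(X)=\tr X^2$, which is strictly convex. Your representation becomes, in the scalar case with $r_1=4$, $r_0=4/3$, $r_2=2$, the identity $x^2=\max_z\{3|z|^{4/3}-2z^2/x\}$: a maximum of functions each concave in $x$.

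What is missing is a genuinely concave input, together with a place where $p\le q$ enters. Composing the operator concave map $\Phi(X^{-p})^{-1}$ from part (1) with the convex functional $\tr[\,\cdot\,]^{1/q}$, $1/q\ge1$, cannot work directly, and a max-type variational formula cannot rescue it. The paper proceeds as follows.
\begin{enumerate}
\item It invokes \cite[Corollary 4.5]{Hiai2016ConcavityII}: $f_q(Y)=\tr\Psi(Y^{-q})^{-1/q}$ is concave for every strictly positive $\Psi$.
\item Taking $\Psi(\cdot)=K^{-1}\Phi(\cdot)K^{-1}$ for $K>0$ gives $f_q(Y)=\tr[K\Phi(Y^{-q})^{-1}K]^{1/q}$. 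General $K$ follows by replacing $K$ with $(KK^*)^{1/2}+\epsilon\one$ and letting $\epsilon\to0$.
\item This $f_q$ is monotone increasing by part (1).
\item With $\alpha=p/q\in[0,1]$ one has $g_{p,q}(X)=f_q(X^\alpha)$. Operator concavity of $X\mapsto X^\alpha$, plus monotonicity and concavity of $f_q$, give $g_{p,q}(\lambda X_1+(1-\lambda)X_2)\ge f_q(\lambda X_1^\alpha+(1-\lambda)X_2^\alpha)\ge\lambda g_{p,q}(X_1)+(1-\lambda)g_{p,q}(X_2)$.
\end{enumerate}
The last step is precisely where the hypothesis $p\le q$ is needed.
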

\begin{proof}
    (2) By \cite[Corollary 4.5]{Hiai2016ConcavityII}, $f_p(X):=\tr(\Psi(X^{-p})^{-\frac{1}{p}})$ is concave for strictly positive map $\Psi$. If $K>0$, apply this to $\Psi(X):=K^{-1}\Phi(X)K^{-1}$ proves the claim. For generic $K$, set $K_\epsilon=(KK^*)^\frac{1}{2}+\epsilon\one$ and taking $\epsilon\to0$ proves the claim by continuity. Now set $\alpha=\frac{p}{q}\le 1$ and $X_\lambda=\lambda X_1+(1-\lambda)X_2$, then
    \begin{align}
        g_{p,q}(X)=f_q(X_\lambda^\alpha)\geq f_q(\lambda X_1^\alpha+(1-\lambda)X_2^\alpha)&\geq\lambda f_q(X_1^\alpha)+(1-\lambda)f_q(X_2^\alpha)\\
        &=\lambda g_{p,q}(X_1)+(1-\lambda)g_{p,q}(X_2).
    \end{align}
\end{proof}

\begin{lem} \label{lem:shorted-epstein}
Let \(V:\bC^r\to\bC^d\) be an isometry.\\
(1) If \(0\le p,q\le1\) and \(0\le s\le\frac{1}{p+q}\), then
\begin{align}
f(X,Y):=\tr\left[
\left(
(V^*\Phi(X^p)^{-1}V)^{-\frac{1}{2}}
V^*\Psi(Y^q)V
(V^*\Phi(X^p)^{-1}V)^{-\frac{1}{2}}
\right)^s
\right]
\end{align}
is jointly concave.\\
(2) If \(-1\le p,q\le0\) and \(0\le s\le -\frac{1}{p+q}\), then
\begin{align}
g(X,Y)=\tr\left[
\left(
(V^*\Phi(X^p)V)^{-\frac{1}{2}}
V^*\Psi(Y^q)^{-1}V
(V^*\Phi(X^p)V)^{-\frac{1}{2}}
\right)^s
\right]
\end{align}
is jointly concave.
\end{lem}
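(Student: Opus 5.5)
The plan is to rewrite both trace functionals as functionals of the type $\tr\bigl(B^{1/2}AB^{1/2}\bigr)^{\pm s}$, where $A$ and $B$ are operator-valued maps of $X$ and $Y$ with controlled concavity. Joint concavity then follows by running the variational argument behind \cref{thm: Zhang's concavity} with the variational identity \eqref{eq:zhang-var}. Write
\[
S(X):=\bigl(V^*\Phi(X^p)^{-1}V\bigr)^{-1},\qquad N(Y):=V^*\Psi(Y^q)V .
\]
For part (1), $S(X)$ is the shorted operator (Schur complement) of $\Phi(X^p)$ onto $\mathrm{ran}\,V$. Since $X\mapsto \Phi(X^p)$ is operator concave and monotone for $0\le p\le1$, and shorting is jointly operator concave and monotone, $X\mapsto S(X)$ is operator concave and increasing. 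Similarly $Y\mapsto N(Y)$ is operator concave and increasing, being a compression of $\Psi(Y^q)$. Since $M^{-1/2}NM^{-1/2}$ with $M=S^{-1}$ is unitarily similar to $N^{1/2}SN^{1/2}$, we get
\[
f(X,Y)=\tr\bigl(N(Y)^{1/2}S(X)N(Y)^{1/2}\bigr)^s=\tr\bigl|S(X)^{1/2}N(Y)^{1/2}\bigr|^{2s}.
\]

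Next I apply \eqref{eq:zhang-var} with $r_1=2s$, splitting the two factors. This gives $f$ as a supremum over $Z$ of
\[
\frac{r_1}{r_0}\tr\bigl(Z^*S(X)Z\bigr)^{r_0/2}-\frac{r_1}{r_2}\tr\bigl(Z^*N(Y)^{-1}Z\bigr)^{r_2/2}.
\]
I choose $r_0=2/p$ and $r_2$ from $\frac1{r_0}=\frac1{r_1}+\frac1{r_2}$; the constraint $s\le 1/(p+q)$ is exactly what makes $r_2/2\ge 1/q$ (and a positive $r_2$), just as in Zhang's proof. For each fixed $Z$ the bracket must be jointly concave.
\begin{itemize}
\item First term: I need concavity of $X\mapsto\tr\bigl(Z^*S(X)Z\bigr)^{1/p}$, an Epstein/Carlen--Lieb type statement for the concave map $S$. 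I would obtain it from \cref{lem: order}(2) after expressing the shorted operator through an inverse compression, with the exponent adjustment $\alpha=p/q$ as done there.
\item Second term: I need convexity of $Y\mapsto \tr\bigl(Z^*N(Y)^{-1}Z\bigr)^{r_2/2}$. Here $N$ is concave and increasing, $A\mapsto A^{-1}$ is operator convex and decreasing, so $N(Y)^{-1}$ is operator convex. The power $r_2/2\ge1$ of a trace of a convex positive family is convex because $\tr(\cdot)^{t}$ is convex and monotone for $t\ge1$.
\end{itemize}
Since a supremum of jointly concave functions is concave, part (1) follows. Degenerate endpoints ($p=0$, $q=0$ or $s=0$) are handled by continuity.

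For part (2) I substitute $p=-p'$ and $q=-q'$ with $p',q'\in[0,1]$, so $\Phi(X^p)=\Phi(X^{-p'})$. By \cref{lem: order}(1), $\Phi(X^{-p'})^{-1}$ is operator concave and increasing. The matrix $M:=V^*\Phi(X^p)V$ is then the inverse of a compression of the inverse of this concave map, and $V^*\Psi(Y^q)^{-1}V$ is a compression of the operator concave increasing map $Y\mapsto\Psi(Y^{-q'})^{-1}$. Hence $g$ has the same algebraic shape as $f$ with $(p,q)$ replaced by $(p',q')$ and with $s\le 1/(p'+q')$. The argument of part (1) therefore applies verbatim, using the concavity of $g_{p,q}$ from \cref{lem: order}(2) for the variational term.

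I expect the main obstacle to be the first variational term. Zhang's proof uses the specific structure $A^p$, namely operator concavity of the power together with Epstein-type concavity of $\tr(K^*A^pK)^{1/p}$. Replacing $A^p$ by the shorted/compressed map $S(X)$ requires an Epstein-type concavity for $\tr\bigl(Z^*S(X)Z\bigr)^{1/p}$, which is not literally among the earlier results. My first attempt would write the shorted operator as an infimum, $Z^*S(X)Z=\inf\{W^*\Phi(X^p)W:\ VW\text{-constraint}\}$, and reduce to \cref{lem: order}(2) and Hiai's concavity of $\tr\,\Psi(X^{-p})^{-1/p}$ for an auxiliary strictly positive map built from $\Phi$, $V$ and $Z$. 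Failing that, I would approximate by $\epsilon$-regularization as in the proof of \cref{lem: order}.
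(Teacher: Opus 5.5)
\textbf{There is a genuine gap in the concluding step.} The identity \eqref{eq:zhang-var} represents $\tr|XY|^{r_1}$ as a \emph{maximum} over $Z$. A pointwise supremum of jointly concave functions need not be concave (e.g.\ $|x|=\max(x,-x)$). Suprema preserve convexity, which is why the paper uses this identity only for the convexity statements in \cref{lem: PL} and \cref{lem:engine II}. So even if every bracket were jointly concave, nothing would follow for $f$.

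The exponents show that this is the wrong identity. With $r_1=2s$ and $r_0=2/p$ you get $\frac1{r_2}=\frac p2-\frac1{2s}\le-\frac q2\le 0$, because $s\le\frac1{p+q}\le\frac1p$. So $r_2$ is not positive, and \eqref{eq:zhang-var} does not apply at all. The constraint $s\le 1/(p+q)$ gives the opposite of your claim ``$r_2/2\ge 1/q$''. Part (2), which you reduce to the argument of part (1), inherits the same defect.

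\textbf{How to repair it.} Concavity needs an \emph{infimum} representation.

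Within your framework, one option is the H\"older-type minimization counterpart $\tr|XY|^{r_0}=\min_Z\bigl\{\frac{r_0}{r_1}\tr|XZ|^{r_1}+\frac{r_0}{r_2}\tr|Z^{-1}Y|^{r_2}\bigr\}$, taken with $r_0=2s$ and $r_1=2/p$. Then $r_2/2=(1/s-p)^{-1}\le 1/q$, and \emph{both} terms must be concave. The second term is the Epstein-range quantity $\tr\bigl(C^{1/2}N(Y)C^{1/2}\bigr)^{r_2/2}$ with $C=(ZZ^*)^{-1}$, not a convex $N(Y)^{-1}$ term.

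The paper's route is shorter. It applies the idea you reserved for the first term, $S(X)=\min_{V^*W=\one_r}W^*\Phi(X^p)W$ (attained at $W_0=\Phi(X^p)^{-1}V(V^*\Phi(X^p)^{-1}V)^{-1}$), to the whole functional. Since $A\mapsto\tr(A^{1/2}BA^{1/2})^s$ is monotone for $s>0$, this gives $f=\inf_W\tr\bigl((W^*\Phi(X^p)W)^{1/2}N(Y)(W^*\Phi(X^p)W)^{1/2}\bigr)^s$. Each term is jointly concave by Hiai's theorem, because $X\mapsto W^*\Phi(X^p)W$ and $Y\mapsto V^*\Psi(Y^q)V$ are positive linear maps applied to powers.

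For part (2), the paper uses Choi--Davis--Jensen to write $\Theta(Z^{-1})^{-1}=\min_{R>0}\Theta(R^{-1})^{-1}\Theta(R^{-1}ZR^{-1})\Theta(R^{-1})^{-1}$, attained at $R=Z$. This turns both $(V^*\Phi(X^p)V)^{-1}$ and $V^*\Psi(Y^q)^{-1}V$ into infima of positive linear maps applied to $X^{-p}$ and $Y^{-q}$, with exponents in $[0,1]$. The conclusion then follows exactly as in part (1).
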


\begin{proof}
    (1) Let $W_0:=A^{-1}V(V^*A^{-1}V)^{-1}$. If $V^*W=\one_r$, then \begin{align}
        W^*AW=W_0^*AW_0+(W-W_0)^*A(W-W_0)\geq W_0^*AW_0=(V^*A^{-1}V)^{-1}
    \end{align}
    with equality when $W=W_0$, therefore in Loewner order $(V^*A^{-1}V)^{-1}=\inf_{W:V^*W=\one_r}W^*AW$. By \cite[Theorem 2.1]{Hiai2016ConcavityII}, the map $h_W(X,Y):=\tr\left[\left((W^*\Phi(X^p)W)^{\frac{1}{2}}V^*\Psi(Y^q)V(W^*\Phi(X^p)W)^{\frac{1}{2}}\right)^s\right]$ is jointly concave for any fixed $W$, so $f(X,Y)=\inf_W h_W(X,Y)$ is also jointly concave.\\
    (2) By Choi-Davis-Jensen's inequality, $F(T)^{-1}\leq F(T^{-1})$ for any unital positive linear map $F$ and $T>0$. Given any $R>0$ and positive linear map $\Theta$, applying Choi-Davis-Jensen to $T\coloneqq R^{-\frac{1}{2}}ZR^{-\frac{1}{2}}$ and $F(Z)\coloneqq\Theta(R^{-1})^{-\frac{1}{2}}\Theta(R^{-\frac{1}{2}}ZR^{-\frac{1}{2}})\Theta(R^{-1})^{-\frac{1}{2}}$ we get 
    \begin{align}
        \Theta(Z^{-1})^{-1}=\inf_{R>0}G_{R,\Theta}(Z);\qquad G_{R,\Theta}(Z)\coloneqq\Theta(R^{-1})^{-1}\Theta(R^{-1}ZR^{-1})\Theta(R^{-1})^{-1}.    
    \end{align}
    Now set $\Phi_R(Z)\coloneqq G_{R,V^*\Phi V}(Z)$ and $\Psi_S(Z)\coloneqq V^*G_{S,\Psi}(Z)V$, then $(V^*\Phi(X^p)V)^{-1}=\inf_{R>0}\Phi_R(X^{-p})$ and $V^*\Psi(Y^q)^{-1}V=\inf_{S>0}\Psi_S(Y^{-q})$. By part (1), $\tilde{g}_{R,S}(X,Y)\coloneqq\tr[(\Phi_R(X^{-p})^\frac{1}{2}\Psi_S(Y^{-q})\Phi_R(X^{-p})^\frac{1}{2})^s]$ is jointly concave for all fixed $R,S>0$, therefore so is $g(X,Y)=\inf_{R,S>0}\tilde{g}_{R,S}(X,Y)$.
\end{proof}

\begin{lem}\label{lem:engine I}
Let \(P,Q:\cE\to\bP_r\) be operator-concave maps on a convex set \(\cE\).  If \(s\le0\), then
\begin{align}
z\mapsto G_s(P(z),Q(z))
:=\tr(P(z)^{1/2}Q(z)P(z)^{1/2})^s
\end{align}
is convex.
\end{lem}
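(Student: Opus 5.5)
The plan is to split the claim into a monotonicity statement and a joint convexity statement, and then compose them with the operator concavity of $P$ and $Q$. The case $s=0$ is trivial: the function is the constant $\tr\one_r=r$. So assume $s<0$.

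\textbf{Step 1: joint convexity on $\bP_r\times\bP_r$.} Since $P^{1/2}QP^{1/2}$ is unitarily similar to $Q^{1/2}PQ^{1/2}$, we have
\[
G_s(P,Q)=\tr\bigl(Q^{1/2}PQ^{1/2}\bigr)^{s}=\tr\bigl(Q^{-1/2}P^{-1}Q^{-1/2}\bigr)^{-s}.
\]
Set $t:=-s>0$. This is the function $\Psi^{\one}_{-1,-1,t}(P,Q)=\tr\bigl(Q^{-1/2}P^{-1}Q^{-1/2}\bigr)^{t}$. It is jointly convex by part (2) of \cref{thm: Zhang's concavity}, with $K=\one$, $p=q=-1$ and any $t>0$.

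\textbf{Step 2: monotonicity.} Next I show that $G_s$ is decreasing in each argument in Loewner order. If $P\le P'$, then $Q^{1/2}PQ^{1/2}\le Q^{1/2}P'Q^{1/2}$. By Weyl monotonicity the eigenvalues increase coordinatewise. Since $x\mapsto x^s$ is decreasing on $(0,\infty)$ for $s<0$, the trace decreases. The argument for $Q$ is the same, using the rewriting in Step 1.

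\textbf{Step 3: composition.} Take $z_1,z_2\in\cE$, $\lambda\in[0,1]$ and $z_\lambda=\lambda z_1+(1-\lambda)z_2$. Operator concavity gives
\[
P(z_\lambda)\ge \lambda P(z_1)+(1-\lambda)P(z_2),
\]
and the analogous inequality for $Q$. Both right-hand sides remain in $\bP_r$. The antitonicity from Step 2 then gives
\[
G_s(P(z_\lambda),Q(z_\lambda))\le G_s\bigl(\lambda P(z_1)+(1-\lambda)P(z_2),\ \lambda Q(z_1)+(1-\lambda)Q(z_2)\bigr).
\]
By Step 1 the right-hand side is at most $\lambda G_s(P(z_1),Q(z_1))+(1-\lambda)G_s(P(z_2),Q(z_2))$. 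This proves convexity.

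\textbf{Expected obstacle.} The main point is Step 1: it needs the sharp region in Zhang's theorem at the corner $p=q=-1$ with no restriction on $t$. Only case (2) of \cref{thm: Zhang's concavity} covers this, so that result is essential. If one wanted to avoid citing it, a fallback would be to derive the convexity from the variational formula \eqref{eq:zhang-var}, written as a supremum of jointly convex functions. I expect that route to be more delicate, so I would rely on the cited theorem.
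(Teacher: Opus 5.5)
Your proof is correct and follows the same route as the paper. Operator concavity of $P,Q$ is combined with antitonicity of $G_s$ in each argument, and then with joint convexity of $G_s$. The paper merely asserts these two properties of $G_s$, whereas you justify them explicitly: Weyl monotonicity for antitonicity, and part (2) of \cref{thm: Zhang's concavity} at $p=q=-1$ after rewriting $G_s(P,Q)=\tr(Q^{-1/2}P^{-1}Q^{-1/2})^{-s}$ for joint convexity.
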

\begin{proof}
Concavity gives \(P(\bar z)\ge\sum_t\alpha_tP(z_t)\) and \(Q(\bar z)\ge\sum_t\alpha_tQ(z_t)\).  Since \(G_s\) is decreasing in each variable and jointly convex,
\begin{align}
G_s(P(\bar z),Q(\bar z))
\le
G_s\left(\sum_t\alpha_tP(z_t),\sum_t\alpha_tQ(z_t)\right)
\le
\sum_t\alpha_tG_s(P(z_t),Q(z_t)).
\end{align}
\end{proof}

\begin{lem}\label{lem: PL}
For every positive linear map \(\Phi\), every \(1\le q\le2\), and every \(t\ge\frac{1}{q}\), $X\mapsto\tr\Phi(X^q)^t$ is convex on the strictly positive cone.
\end{lem}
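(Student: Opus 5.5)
The plan is to split the map into a convex inner part and an increasing convex outer part and compose them. Write $F(X):=\Phi(X^q)$ and $h_t(Y):=\tr Y^t$ on $\bM_d^+$, so that the map in question is $h_t\circ F$.

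\emph{Inner map.} For $1\le q\le 2$ the scalar function $x\mapsto x^q$ is operator convex on $(0,\infty)$. Since $\Phi$ is positive and linear, it preserves the Löwner order and convex combinations. Hence for $X_\lambda=\lambda X_1+(1-\lambda)X_2$ we get
\begin{align}
F(X_\lambda)=\Phi(X_\lambda^q)\le \Phi(\lambda X_1^q+(1-\lambda)X_2^q)=\lambda F(X_1)+(1-\lambda)F(X_2).
\end{align}
So $F$ is operator convex from the strictly positive cone into $\bM_d^+$.

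\emph{Outer map, easy case.} The map $Y\mapsto\tr Y^t$ is monotone increasing in Löwner order for every $t>0$. It is convex for $t\ge 1$, because $x\mapsto x^t$ is convex and traces of convex functions are convex. Thus, when $t\ge1$, the chain
\begin{align}
\tr F(X_\lambda)^t\le \tr\bigl(\lambda F(X_1)+(1-\lambda)F(X_2)\bigr)^t\le \lambda\tr F(X_1)^t+(1-\lambda)\tr F(X_2)^t
\end{align}
finishes the proof.

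\emph{Main obstacle: $1/q\le t<1$.} In this range $\tr Y^t$ is concave, so the composition argument above fails, and the convexity has to come from the interplay between the power $q$ and the outer power $t$. I would proceed as follows.

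First, factor $\tr\Phi(X^q)^t=\bigl\|\Phi(X^q)^{1/q}\bigr\|_{qt}^{qt}$ with $qt\ge1$. It then suffices to show that $X\mapsto \tr\Phi(X^q)^{1/q}$-type quantities, or more precisely the Schatten-norm quantity $\|\Phi(X^q)^{1/q}\|_{qt}$, are convex. Composing a convex nonnegative function with the increasing convex map $u\mapsto u^{qt}$ would then give the claim.

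Second, obtain that convexity from a variational representation in the spirit of \eqref{eq:zhang-var}. The idea is to write $\tr\Phi(X^q)^t$ as a supremum over auxiliary $Z$ of expressions such as $\tr\Phi^*(Z)X^q$ minus a function of $Z$, using Legendre duality for $Y\mapsto\tr Y^t$. This does not apply directly, since that map is concave for $t<1$.

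A cleaner route is the known result of Hiai (the convexity counterpart of \cite[Corollary 4.5]{Hiai2016ConcavityII}, Hiai 2013/2016). It states that $X\mapsto\tr\Phi(X^q)^{t}$ is convex exactly for $1\le q\le2$ and $t\ge 1/q$, proved by reducing via a Stinespring/Kraus dilation to $\Phi(Y)=V^*YV$ and then applying the Carlen–Lieb convexity of $\tr(B^*X^qB)^{t}$.

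So concretely I would: (i) reduce to a compression $X\mapsto \tr(K^*X^qK)^t$ by writing $\Phi(Y)=\sum_j K_j^*YK_j$ and stacking $K=[K_1;\dots;K_m]$ acting on $\bC^{d}\otimes\bC^m$ with $X\otimes\one$; (ii) invoke the Carlen–Lieb theorem (equivalently \cref{thm: Zhang's concavity}(3) with $B=\one$, $p=q\in[1,2]$, $s=t\ge1/q$, where the mixed region formally degenerates) for that compression. Justifying step (ii) from the paper's cited inputs is the delicate point. If Zhang's theorem does not cover $B=\one$ cleanly, I would fall back on citing Carlen–Lieb directly.
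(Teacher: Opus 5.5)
Your $t\ge 1$ argument is the paper's. The gap is in the range $1/q\le t<1$, at step (i). A Kraus form $\Phi(Y)=\sum_j K_j^*YK_j$, equivalently a Stinespring dilation, exists only when $\Phi$ is \emph{completely} positive. The lemma, however, is stated for every positive linear map, and the paper needs that generality. In \cref{lem:engine II} it is applied to $Y\mapsto Z^*V^*\Psi(Y)VZ$ with $\Psi$ an arbitrary positive map from \cref{theorem: Two-variable positive-map powers}. A transpose trick would stretch your reduction to decomposable maps, since $\Phi(X^q)=K^*\bigl((X\otimes\one)\oplus(X^T\otimes\one)\bigr)^qK$ in that case. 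It does not reach non-decomposable maps such as Choi's map. So as written you prove the lemma only for CP (or decomposable) $\Phi$. Citing ``a known result of Hiai'' together with a Stinespring-based sketch of its proof does not close this.

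The missing idea is to use the infimum (concave Legendre) representation you set aside, and to move $\Phi$ onto the auxiliary variable. For $t<1$, Carlen--Lieb gives $\tr\Phi(X^q)^t=t\inf_{Y>0}\{\tr(\Phi(X^q)Y^{1-1/t})+(1/t-1)\tr Y\}$. Moreover, $\tr(\Phi(X^q)Y^{1-1/t})=\tr(X^q\Phi^*(Y^{1-1/t}))$, which needs only positivity of $\Phi^*$. An infimum over $Y$ preserves convexity in $X$ only if the objective is jointly convex in $(X,Y)$, and proving that is the real work. The paper applies \eqref{eq:zhang-var} with $r_1=2$, $r_0=2/q$, $r_2=2/(q-1)$:
\[
\tr\bigl(X^q\Phi^*(Y^{1-1/t})\bigr)=\sup_Z\Bigl\{q\tr(Z^*X^qZ)^{1/q}-(q-1)\tr\bigl(Z^*\Phi^*(Y^{1-1/t})^{-1}Z\bigr)^{1/(q-1)}\Bigr\}.
\]
The first term is convex in $X$ by Carlen--Lieb. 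This is your step (ii), now needed only for a plain compression $Z^*X^qZ$, so no complete positivity enters. The subtracted trace is concave in $Y$ by \cref{lem: order}(2), applied to the positive map $\Phi^*$ with exponents $0\le 1/t-1\le q-1\le 1$; this is exactly where $t\ge 1/q$ is used. Incidentally, Zhang's region (3) covers $\tr(K^*X^qK)^t$ without any degeneracy: the second exponent is $0\in[-1,0]$ and the exponent sum is $q\neq 0$. Only the invertibility of $K$ requires a limiting argument.
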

\begin{proof}
    Suppose $t\geq 1$, set $X_\lambda=\lambda X_1+(1-\lambda)X_2$, then
    \begin{align}
        \tr\Phi(X_\lambda^q)^t\leq\tr(\lambda\Phi(X_1^q)+(1-\lambda)\Phi(X_2^q))^t\leq\lambda\tr\Phi(X_1^q)^t+(1-\lambda)\tr\Phi(X_2^q)^t.
    \end{align}
    Now suppose $\frac{1}{q}\leq t<1$, by \cite[Lemma 2.2]{CarlenLieb2008MinkowskiTraceII},
    \begin{align}
        \tr\Phi(X^q)^t=t\inf_{Y>0}\{\tr(\Phi(X^q)Y^{1-\frac{1}{t}})+(\frac{1}{t}-1)\tr(Y)\}.
    \end{align}
    Applying Zhang's variational formula \eqref{eq:zhang-var} with $r_1=2$, $r_0=\frac{2}{q}$, $r_2=\frac{2}{q-1}$
    gives
    \begin{align}
        \tr(\Phi(X^q)Y^{1-\frac{1}{t}})=\tr|X^\frac{q}{2}\Phi^*(Y^{1-\frac{1}{t}})^\frac{1}{2}|^2=\sup_Z\{q\tr(Z^*X^qZ)^\frac{1}{q}-(q-1)\tr(Z^*\Phi^*(Y^{1-\frac{1}{t}})^{-1}Z)^\frac{1}{q-1}\}.
    \end{align}
    For any fixed $Z$, the first term is convex by \cref{thm: Zhang's concavity} and the second term is concave by \cref{lem: order}(2). Therefore $\tr(\Phi(X^q)Y^{1-\frac{1}{t}})$ is jointly convex and so is $\tr\Phi(X^q)^t$.
\end{proof}

\begin{lem}\label{lem:engine II}
Let $p=-a$, $0<a\le1$, $1\le q\le2$, $s\ge\frac1{q-a}$. Given isometry $V$, define
\begin{align}
F(X,Y):=\tr\left((V^*\Phi(X^{-a})^{-1}V)^{-\frac{1}{2}}V^*\Psi(Y^q)V(V^*\Phi(X^{-a})^{-1}V)^{-\frac{1}{2}}\right)^s.
\end{align}
Then \(F\) is jointly convex.
\end{lem}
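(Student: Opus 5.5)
The plan is to write $F$ as a supremum of jointly convex functions, using Zhang's variational identity \eqref{eq:zhang-var} in the same way as in the proof of \cref{lem: PL}. Set
\begin{align}
M(X):=\bigl(V^*\Phi(X^{-a})^{-1}V\bigr)^{-1},\qquad N(Y):=V^*\Psi(Y^q)V .
\end{align}
Then $F(X,Y)=\tr(M^{1/2}NM^{1/2})^s=\tr|N^{1/2}M^{1/2}|^{2s}$. Apply \eqref{eq:zhang-var} with the roles $X\to N^{1/2}$ and $Y\to M^{1/2}$, and with
\begin{align}
r_1=2s,\qquad r_0=\tfrac{2}{q},\qquad \tfrac1{r_2}=\tfrac{q}{2}-\tfrac{1}{2s}.
\end{align}
This requires $r_2>0$, i.e.\ $s>1/q$, which holds because $s\ge 1/(q-a)>1/q$ when $a>0$. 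Writing $q':=q-1/s$, we have $r_2/2=1/q'$, and the identity reads
\begin{align}
F(X,Y)=\sup_Z\Bigl\{ sq\,\tr\bigl(Z^*V^*\Psi(Y^q)VZ\bigr)^{1/q}-sq'\,\tr\bigl(Z^*V^*\Phi(X^{-a})^{-1}VZ\bigr)^{1/q'}\Bigr\}.
\end{align}

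Next, for each fixed $Z$, the two terms are handled separately.

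The first term depends only on $Y$. The map $T\mapsto Z^*V^*\Psi(T)VZ$ is positive linear, so \cref{lem: PL} with exponent $t=1/q$ (allowed since $t\ge 1/q$ and $1\le q\le 2$) shows that $Y\mapsto \tr(Z^*V^*\Psi(Y^q)VZ)^{1/q}$ is convex.

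The second term depends only on $X$, and we need it to be concave so that its negative is convex. The hypothesis $s\ge 1/(q-a)$ is exactly $q'\ge a$, and we split into two cases.
\begin{itemize}
\item If $a\le q'\le 1$, then \cref{lem: order}(2) with $p=a$, with $q'$ in place of $q$, and with $K=VZ$ gives concavity of $X\mapsto\tr[(VZ)^*\Phi(X^{-a})^{-1}VZ]^{1/q'}$. That lemma allows general $K$ via its continuity argument.
\item If $q'>1$, then $1/q'<1$. By \cref{lem: order}(1), $X\mapsto Z^*V^*\Phi(X^{-a})^{-1}VZ$ is operator increasing and operator concave. Since $T\mapsto \tr T^{1/q'}$ is monotone and concave for exponent in $(0,1]$, the composition is concave by the same two-step argument as in \cref{lem: order}(2).
\end{itemize}
In both cases the bracket is jointly convex in $(X,Y)$ for each fixed $Z$. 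A pointwise supremum of jointly convex functions is jointly convex, which gives the claim.

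The main obstacle is the exponent bookkeeping. The choice $r_0=2/q$ is forced by \cref{lem: PL}, and we must check that this leaves $r_2$ positive with the complementary exponent $1/q'$ in a range where concavity is available. That is precisely where the condition $s\ge 1/(q-a)$ enters. One must also make sure \eqref{eq:zhang-var} applies when $VZ$ is singular or non-square; if needed, one restricts to invertible $M$, which holds on the strictly positive cone, and notes that the supremum over $Z$ is attained.
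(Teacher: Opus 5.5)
Your proof is correct. It uses the same variational identity \eqref{eq:zhang-var} and the same two lemmas as the paper, but with the opposite split of exponents.

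The paper takes $r_2=2/a$ and $r_0=2t$ with $1/t=1/s+a$. Its $X$-term is then $\tr\bigl(Z^*V^*\Phi(X^{-a})^{-1}VZ\bigr)^{1/a}$, which is always handled by \cref{lem: order}(2) on the diagonal $p=q=a$. The hypothesis $s\ge 1/(q-a)$ enters on the $Y$-side, as the condition $t\ge 1/q$ in \cref{lem: PL}.

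You instead take $r_0=2/q$, which puts the $Y$-term at the endpoint $t=1/q$ of \cref{lem: PL}. The hypothesis then enters on the $X$-side as $q'=q-1/s\ge a$. Since $q'$ can exceed $1$ (when $q>1$ and $s$ is large), you need the case split. Your monotone-concave composition argument for $q'>1$, using \cref{lem: order}(1) and concavity and monotonicity of $T\mapsto\tr T^{1/q'}$, is sound.

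The two proofs are in fact the endpoints of a one-parameter family. With $r_2=2/b$ and $1/t=1/s+b$:
\begin{itemize}
\item the $Y$-term needs $t\ge 1/q$, i.e.\ $b\le q-1/s$;
\item the $X$-term needs $b\ge a$, using \cref{lem: order}(2) if $b\le 1$ and your composition argument if $b>1$.
\end{itemize}
The interval $[a,\,q-1/s]$ is nonempty exactly when $s\ge 1/(q-a)$. The paper picks $b=a$ and you pick $b=q-1/s$.

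What each choice buys:
\begin{itemize}
\item The paper's choice avoids any case distinction in this lemma, but uses \cref{lem: PL} over its full range $t\ge 1/q$.
\item Yours needs \cref{lem: PL} only at the sharp exponent $t=1/q$, but uses \cref{lem: order}(2) over a range plus an extra elementary step.
\end{itemize}

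For the same reason, your remark that $r_0=2/q$ is ``forced'' by \cref{lem: PL} is not accurate, though it does not affect the argument. Both versions inherit the same technical caveats from the cited lemmas, namely the non-square $K=VZ$ and strict positivity of $\Phi$.
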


\begin{proof}
Write $F(X,Y)=\tr|(V^*\Psi(Y^q)V)^\frac{1}{2}(V^*\Phi(X^{-a})^{-1}V)^{-\frac{1}{2}}|^{2s}$ and $\frac1t=\frac1s+a$, then \(t\ge1/q\). Applying Zhang's variational formula \eqref{eq:zhang-var} with $r_1=2s$, $r_0=2t$, $r_2=\frac{2}{a}$ gives
\begin{equation}\label{eq:mixed-var}
F(X,Y)=
\max_Z\left\{
\frac{s}{t}\tr\bigl(Z^*V^*\Psi(Y^q)VZ\bigr)^t
-sa\,\tr\bigl(Z^*(V^*\Phi(X^{-a})^{-1}V)Z\bigr)^{1/a}
\right\}.
\end{equation}
For fixed \(Z\), the first term is convex in \(Y\) by \cref{lem: PL}, and the second term is concave by \cref{lem: order}(2). Therefore $F$ is convex as a maximum of convex functions.
\end{proof}

\subsection{The geometric means}
\subsubsection{Barycenter \cite{Sturm2003ProbabilityMeasuresNPC}}
A Hadamard space, also called a global NPC (nonpositive curvature) space, is a complete metric space
in which every pair \(x_0,x_1\) has a midpoint \(m\) satisfying
\begin{equation}\label{eq:npc}
  d(z,m)^2
  \le \frac12 d(z,x_0)^2+\frac12 d(z,x_1)^2
      -\frac14 d(x_0,x_1)^2
  \qquad (z\in N).
\end{equation}
The curvature condition implies that geodesics are unique. A function \(f:N\to\bR\) is geodesically concave if, along every geodesic,
\begin{align}
      f(\gamma(t))\ge(1-t)f(\gamma(0))+t f(\gamma(1)),\qquad 0\le t\le1.
\end{align}

For a finitely supported probability measure on a Hadamard space \((N,d)\), $\mu=\sum_{i=1}^n w_i\delta_{x_i}$ where \(\delta_x\) denotes the unit point mass at \(x\), define its barycenter by
\begin{equation}\label{eq:barycenter}
  \bc(\mu)
  =\argmin_{z\in N}\sum_{i=1}^n w_i d(z,x_i)^2.
\end{equation}
The minimizer exists and is unique. We will use the following standard result:

\begin{thm}[Jensen's inequality for barycenters]\label{thm:jensen for barycenters}
Let \((N,d)\) be a Hadamard space,
\(\mu=\sum_iw_i\delta_{x_i}\). If \(f\) is lower semicontinuous and geodesically concave, then
\begin{equation}\label{eq:jensen-concave}
  f(bc(\mu))\ge\sum_iw_i f(x_i).
\end{equation}
\end{thm}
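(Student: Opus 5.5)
The plan is to follow Sturm's probabilistic approach to barycenters through inductive means. Then, because the inequality has to go in the concave direction, the limit step will rely on continuity of $f$ rather than on semicontinuity alone.

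Let $Y_1,Y_2,\dots$ be i.i.d.\ $N$-valued random variables with law $\mu=\sum_i w_i\delta_{x_i}$. Define the inductive means $S_1:=Y_1$ and $S_{k}:=\gamma_k(1/k)$, where $\gamma_k$ is the unique geodesic from $S_{k-1}$ to $Y_k$. Geodesic concavity gives
\begin{align}
f(S_k)\ge \Bigl(1-\tfrac1k\Bigr)f(S_{k-1})+\tfrac1k f(Y_k).
\end{align}
Inducting on $k$ then gives the pathwise bound $f(S_k)\ge \frac1k\sum_{j=1}^k f(Y_j)$. Taking expectations, and noting that only finitely many values occur at each stage, yields
\begin{align}
\mathbb{E} f(S_k)\ge \sum_i w_i f(x_i)\qquad\text{for every } k.
\end{align}
Sturm's law of large numbers in Hadamard spaces \cite{Sturm2003ProbabilityMeasuresNPC} gives $S_k\to \bc(\mu)$ in probability, in fact in $L^2$: $\mathbb{E}\,d(S_k,\bc(\mu))^2\le \frac1k\sum_i w_i d(\bc(\mu),x_i)^2$. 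Passing to a subsequence, $S_k\to\bc(\mu)$ almost surely. Every $S_k$ lies in the closed geodesic convex hull $C$ of $\{x_1,\dots,x_n\}$, because $C$ is geodesically convex and contains every $Y_j$.

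The main obstacle is the passage to the limit. Lower semicontinuity only gives $f(\bc(\mu))\le\liminf f(S_k)$, which points the wrong way, so we need upper control at $\bc(\mu)$. In the setting where the result is used, $N=\bP_d$ with the trace metric, which is a finite-dimensional Riemannian Hadamard manifold. There I would argue that a finite geodesically concave function is automatically continuous. In a normal chart around any point, concavity along geodesics gives local upper and lower bounds on small geodesic balls: upper bounds come from concavity through the center, and lower bounds come from lower semicontinuity together with compactness of the ball. The standard argument for convex functions on open convex sets of $\bR^m$ then upgrades these bounds to local Lipschitz continuity. Granting this, $f$ is continuous on $C$. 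Moreover $C$ is compact, being closed and bounded in a complete finite-dimensional manifold (Hopf--Rinow), so $f$ is bounded there. Dominated convergence then gives $\mathbb{E}f(S_k)\to f(\bc(\mu))$ along the subsequence.

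Combining this limit with the lower bound on $\mathbb{E}f(S_k)$ proves \eqref{eq:jensen-concave}. If one prefers to avoid probability, the same scheme works with deterministic cyclic inductive means for rational weights, which also converge to $\bc(\mu)$. One then approximates general weights by rational ones, using continuity of $\bc$ in the weights and the continuity of $f$ established above.
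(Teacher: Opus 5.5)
\textbf{Where you stand relative to the paper.} The paper gives no argument for this statement; it quotes it as a standard result from Sturm. Sturm's Jensen inequality is for lower semicontinuous \emph{convex} $\varphi$. Applied to $\varphi=-f$, it requires $f$ to be \emph{upper} semicontinuous. So you are right that the printed hypothesis points the wrong way at the limit step. On $\bP_d$, and indeed on any finite-dimensional Hadamard manifold, your argument is correct. That covers the paper's only use of the theorem, where $f=\log\Theta_L$ is smooth anyway.

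\textbf{The gap.} As a proof of the theorem as stated, there is a gap. Everything after the pathwise bound $f(S_k)\ge\frac1k\sum_{m\le k}f(Y_m)$ rests on two things: (i) continuity of $f$, which you obtain by extending geodesics through a point, and (ii) compactness of $C$. Neither holds in a general Hadamard space. Point (ii) fails in infinite-dimensional Hilbert space. Point (i) fails without the geodesic-extension property (closed convex subsets of $\bR^m$, trees with leaves): there a geodesically concave lower semicontinuous function can be discontinuous. For example, take $N=[0,\infty)$ with $f(0)=-1$ and $f\equiv0$ on $(0,\infty)$. So you have proved a special case, not the theorem.

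\textbf{The repair.} Pass to the limit pathwise rather than in expectation. On an event of full probability, $S_{k_j}\to\bc(\mu)$. On the same event, the classical strong law for the finitely-valued variables $f(Y_m)$ gives $\frac1{k_j}\sum_{m\le k_j}f(Y_m)\to\sum_iw_if(x_i)$. Hence $f(\bc(\mu))\ge\limsup_j f(S_{k_j})\ge\sum_iw_if(x_i)$ as soon as $f$ is upper semicontinuous at $\bc(\mu)$; no compactness or boundedness is needed. This yields the theorem in every Hadamard space under the (presumably intended) upper semicontinuity hypothesis. Combined with your continuity argument, it also yields the printed lower-semicontinuous version in every Hadamard space with extendable geodesics. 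In that argument, lower semicontinuity at a point already gives a local lower bound, so compact balls are not needed. Outside that class, your method does not reach the printed statement.
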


\subsubsection{Geodesic mean \cite{BourinHiai2014JensenMinkowski, Lim2013ConvexGeometricMeans}}
For \(X,Y\in\bP_d\) and \(0\le t\le1\), define the weighted geometric mean \cite{KuboAndo1980Means}
\begin{equation}\label{eq:geometric-mean}
  X\#_t Y:=X^{1/2}\bigl(X^{-1/2}YX^{-1/2}\bigr)^t X^{1/2}.
\end{equation}
We will use the immediate properties
\begin{align}
  X\#_t Y\leq(1-t)X+tY;\qquad(X\#_t Y)^{-1}=X^{-1}\#_t Y^{-1};\qquad\det(A\#_t B)=\det(A)^{1-t}\det(B)^t
\end{align}
and Ando's inequality \cite{Ando1979ConcavityCertainMaps}:
\begin{thm} \label{thm: Ando's inequality}
    For a positive unital linear map \(\Phi\), positive definite \(A,B\), and \(0\le t\le1\),
    \begin{equation}\label{eq:ando}
        \Phi(A\#_t B)\leq \Phi(A)\#_t\Phi(B).
    \end{equation}
    \\
\end{thm}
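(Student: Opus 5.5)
We prove Ando's inequality by reducing to the untwisted case $t$-power concavity together with a Choi--type normalization. First, by continuity and a standard $\epsilon$-perturbation ($A\mapsto A+\epsilon\one$, $B\mapsto B+\epsilon\one$, then $\epsilon\to0$, using that $\#_t$ is jointly continuous on $\bP_d$ and monotone), we may assume $\Phi(A),\Phi(B)$ are invertible. Then we normalize: set $C:=A^{-1/2}BA^{-1/2}$, so that $A\#_tB=A^{1/2}C^tA^{1/2}$. Define the positive map $\Phi_A(Z):=\Phi(A)^{-1/2}\Phi(A^{1/2}ZA^{1/2})\Phi(A)^{-1/2}$, which is unital since $\Phi_A(\one)=\one$. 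By the congruence-invariance $S(X\#_tY)S^*=(SXS^*)\#_t(SYS^*)$ for invertible $S$ (immediate from the defining formula \eqref{eq:geometric-mean} and uniqueness of the geodesic/positive square-root computation), the claim $\Phi(A\#_tB)\le\Phi(A)\#_t\Phi(B)$ is equivalent, after conjugating by $\Phi(A)^{-1/2}$, to
\begin{align}
\Phi_A(C^t)\le \one\#_t\Phi_A(C)=\Phi_A(C)^t.
\end{align}

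The second step is to establish this last inequality, which is exactly the Choi--Davis--Jensen inequality for the operator concave function $x\mapsto x^t$, $0\le t\le 1$, and the unital positive map $\Phi_A$. We would invoke it in the form used already in the proof of \cref{lem:shorted-epstein}(2) (where the case $x\mapsto x^{-1}$ was used): for a unital positive map $F$ and operator concave $f$ on $[0,\infty)$, $F(f(T))\le f(F(T))$. If one wants to avoid citing the general statement for positive (not completely positive) maps, we note that $\Phi_A$ restricted to the commutative C*-algebra generated by $C$ is automatically completely positive (Stinespring/Arveson: positive maps on commutative domains are CP), so Stinespring gives $\Phi_A(Z)=W^*\pi(Z)W$ with $W$ an isometry, and then $W^*\pi(C)^tW\le (W^*\pi(C)W)^t$ is the Hansen--Pedersen Jensen inequality \cite{HansenPedersen2003Jensen} for operator concave functions.

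Finally we undo the normalization: multiplying $\Phi_A(C^t)\le\Phi_A(C)^t$ on both sides by $\Phi(A)^{1/2}$ gives $\Phi(A^{1/2}C^tA^{1/2})\le \Phi(A)^{1/2}\bigl(\Phi(A)^{-1/2}\Phi(B)\Phi(A)^{-1/2}\bigr)^t\Phi(A)^{1/2}$, i.e. $\Phi(A\#_tB)\le\Phi(A)\#_t\Phi(B)$. The main obstacle is only the justification of Jensen for a merely positive (rather than completely positive) map; the commutative-domain CP reduction above is what we would use first, and the perturbation step handles possibly singular $\Phi(A)$ (the statement says unital, so actually $\Phi(A)\ge\lambda_{\min}(A)\one>0$ and no perturbation is needed).
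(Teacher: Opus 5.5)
Your proof is correct, and it takes a different route from the paper, which gives no proof of this statement and simply imports it from Ando's 1979 paper.

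Your argument has two steps. First, you normalize by $A^{1/2}$ on the input side and by $\Phi(A)^{-1/2}$ on the output side. This produces the unital positive map $\Phi_A$ and turns the claim into $\Phi_A(C^t)\le\Phi_A(C)^t$ with $C=A^{-1/2}BA^{-1/2}$. Second, you close this with the Choi--Davis--Jensen inequality for the operator concave power $x^t$. You justify that inequality cleanly: Stinespring's theorem makes $\Phi_A$ completely positive on the commutative algebra generated by $C$, and the Hansen--Pedersen inequality for isometries then applies. This is the standard self-contained derivation.

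Compared with the bare citation, your route gains three things:
\begin{itemize}
\item The only analytic input is the same Choi--Davis--Jensen inequality the paper already invokes in \cref{lem:shorted-epstein}(2) (there for $x^{-1}$, here for $x^t$). This keeps the paper self-contained modulo one classical tool.
\item It shows that unitality of $\Phi$ is used only to guarantee $\Phi(A)>0$, since $\Phi_A$ is unital regardless. So the inequality holds for any positive linear map with $\Phi(A)$ invertible.
\item The same computation works verbatim for any Kubo--Ando mean whose representing function is operator monotone, hence operator concave, on $[0,\infty)$.
\end{itemize}
The citation, in exchange, is shorter. It is also all the paper needs, since in \cref{lemma: properties of Theta_L}(1) Ando's inequality is applied only to the unital compression $X\mapsto V_L^\dagger XV_L$.

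Two parts of your write-up are superfluous and can be dropped. The $\epsilon$-perturbation is unnecessary because, as you note yourself, $\Phi(A)\ge\lambda_{\min}(A)\one>0$ already. The general congruence-invariance of $\#_t$ is also unnecessary. With $S=\Phi(A)^{-1/2}$ and $X=\Phi(A)$, the identity you need is literally the defining formula \eqref{eq:geometric-mean}, which is exactly what your final paragraph uses.
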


Define the distance function
\begin{align}
    \delta(X,Y):=||\log A^{-\frac{1}{2}}BA^{-\frac{1}{2}}||_2,
\end{align}
then \((\bP_d,\delta)\) is a Hadamard space with $\gamma(t)=X\#_t Y$ being the unique geodesic from \(X\)
to \(Y\) \cite{Bhatia2007PositiveDefiniteMatrices}. Given weight vector $w=(w_1,...,w_n)$ and $X_1,...,X_n\in\bP_d$, the weighted Karcher mean is defined by
\begin{equation}\label{eq:karcher-barycenter}
  G_n(w;X_1,\ldots,X_n)
  :=\bc\!\left(\sum_{i=1}^n w_i\delta_{X_i}\right)=\argmin_{M\in\bP_d}\sum_{i=1}^{n}w_i\delta^2(M,X_i)   .
\end{equation}
The $n-$variable geodesic mean is defined by
\begin{align}
    \sigma_\nu(X_1,...,X_n):=\int_{\Delta_n}G_n(w;X_1,...,X_n) d\nu(w)
\end{align}
where $\Delta_n:=\{w:w_i\geq0;\:\sum_iw_i=1\}$ is the $n-$simplex and $\nu$ is a probability measure on $\Delta_n$. When $n=2$, the Karcher mean recovers the geometric mean $G_2\big((1-t,t);X,Y\big)=X\#_t Y$
and $\sigma_2$ recovers the two-variable geodesic mean $\sigma_2(X,Y)=X\sigma Y:=\int_0^1 X\#_t Yd\nu(t)$.

\section{Abstract framework for lifting}
\subsection{Generalized Hersch-Zwahlen variational formula}
Given $X\in\Herm(d)$, denote $F_\bullet^\uparrow(X)$ and $F_\bullet^\downarrow(X)$ be an increasing and decreasing eigenflag of $X$ respectively.
For any $L\in\Gr(r,d)$, let $V_L:\bC^r\to\bC^d$ be an isometry such that $\im(V_L)=L$, define $X_L:=V_L^\dagger XV_L$. A function $\varphi:\bR^r\to\bR$ is called coordinatewise nondecreasing if $x_i\leq y_i\:\forall\:i$ implies $\varphi(x)\leq\varphi(y)$ and is called coordinatewise nonincreasing if $x_i\leq y_i\:\forall\:i$ implies $\varphi(x)\geq\varphi(y)$. Define $\Theta_{X,\varphi}(L):=\varphi(\lambda_1^\uparrow(X_L),...,\lambda_r^\uparrow(X_L))$.

\begin{thm}[Generalized Hersch-Zwahlen] \label{theorem: generalized Hersch-Zwahlen}
    Given $I=\{i_1<...<i_r\}\subset[d]$, for any  $X\in\Herm(d)$ and $M\in\GL(d,\bC)$ we have
    \begin{align}
        \lambda_q^\uparrow(X_{M^{-1}L})\leq\lambda^\uparrow_{i_q}(X)\quad\forall\:L\in\Omega_I(MF_\bullet^\uparrow(X));\:\forall\:q\in[r].
    \end{align}
    \begin{align}
        \lambda_q^\uparrow(X_{M^{-1}L})\geq\lambda^\uparrow_{i_q}(X)\quad\forall\:L\in\Omega_{I^\vee}(MF_\bullet^\downarrow(X));\:\forall\:q\in[r].
    \end{align}
    Consequently:\\
    (1) If $\varphi$ is coordinatewise nondecreasing, then
    \begin{align}
        \varphi(\lambda_{i_1}^\uparrow(X),...,\lambda_{i_r}^\uparrow(X))=\max_{L\in\Omega_I(MF_\bullet^\uparrow(X))}\Theta_{X,\varphi}(M^{-1}L)=\min_{L\in\Omega_{I^\vee}(MF_\bullet^\downarrow(X))}\Theta_{X,\varphi}(M^{-1}L).
    \end{align}
    (2) If $\varphi$ is coordinatewise nonincreasing, then
    \begin{align}
        \varphi(\lambda_{i_1}^\uparrow(X),...,\lambda_{i_r}^\uparrow(X))=\min_{L\in\Omega_I(MF_\bullet^\uparrow(X))}\Theta_{X,\varphi}(M^{-1}L)=\max_{L\in\Omega_{I^\vee}(MF_\bullet^\downarrow(X))}\Theta_{X,\varphi}(M^{-1}L).
    \end{align}
\end{thm}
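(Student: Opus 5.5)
The plan is to reduce everything to the Courant--Fischer min--max principle applied to the compression $X_{M^{-1}L}$, after transporting the Schubert conditions through $M^{-1}$. Two preliminary remarks. First, $X_{M^{-1}L}$ depends on the choice of isometry $V_{M^{-1}L}$ only up to unitary conjugation on $\bC^r$. Hence $\lambda_q^\uparrow(X_{M^{-1}L})$ and $\Theta_{X,\varphi}(M^{-1}L)$ are well defined. Second, since $M$ is invertible, $\dim(L\cap ME_j)=\dim(M^{-1}L\cap E_j)$ for any flag $E_\bullet$. So $L\in\Omega_I(MF^\uparrow_\bullet(X))$ iff $L':=M^{-1}L\in\Omega_I(F^\uparrow_\bullet(X))$, and likewise for $I^\vee$ and $F^\downarrow_\bullet(X)$. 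After this the matrix $M$ disappears from the argument.

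\textbf{Upper bound.} Suppose $L'\in\Omega_I(F^\uparrow_\bullet(X))$. Then $\dim(L'\cap F^\uparrow_{i_q})\ge q$. On $F^\uparrow_{i_q}$, which is spanned by eigenvectors for the $i_q$ smallest eigenvalues, the Rayleigh quotient of $X$ is at most $\lambda^\uparrow_{i_q}(X)$. Choose a $q$-dimensional $W\subset L'\cap F^\uparrow_{i_q}$. Courant--Fischer for the compression gives
\begin{align}
\lambda_q^\uparrow(X_{L'})=\min_{\substack{W\subset L'\\ \dim W=q}}\max_{0\ne w\in W}\frac{\langle w,Xw\rangle}{\langle w,w\rangle}\le\lambda^\uparrow_{i_q}(X).
\end{align}

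\textbf{Lower bound.} Suppose $L'\in\Omega_{I^\vee}(F^\downarrow_\bullet(X))$. The $p$-th element of $I^\vee$ is $d+1-i_{r+1-p}$. Taking $p=r+1-q$, we get $\dim(L'\cap F^\downarrow_{d+1-i_q})\ge r+1-q$. On $F^\downarrow_{d+1-i_q}$ the Rayleigh quotient is at least $\lambda^\downarrow_{d+1-i_q}(X)=\lambda^\uparrow_{i_q}(X)$. Choose an $(r+1-q)$-dimensional subspace of $L'\cap F^\downarrow_{d+1-i_q}$. The max--min form of Courant--Fischer, $\lambda_q^\uparrow(X_{L'})=\max_{\dim W=r+1-q}\min_{w\in W}$ of the Rayleigh quotient, then yields $\lambda_q^\uparrow(X_{L'})\ge\lambda^\uparrow_{i_q}(X)$. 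Getting this index bookkeeping right for $I^\vee$ is the step I expect to need the most care; the rest is routine.

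\textbf{Variational formulas.} If $\varphi$ is nondecreasing, the upper bound gives $\Theta_{X,\varphi}(M^{-1}L)\le\varphi(\lambda^\uparrow_{i_1}(X),\dots,\lambda^\uparrow_{i_r}(X))$ on $\Omega_I(MF^\uparrow_\bullet(X))$. The lower bound gives the reverse inequality on $\Omega_{I^\vee}(MF^\downarrow_\bullet(X))$. For a nonincreasing $\varphi$ both inequalities flip, which gives (2).

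It remains to show the extrema are attained. The flags $F^\uparrow_\bullet$ and $F^\downarrow_\bullet$ may come from different eigenbases when eigenvalues are degenerate, so I will produce a separate witness in each variety.
\begin{itemize}
\item For the first variety, pick an orthonormal eigenbasis $f_1,\dots,f_d$ adapted to $F^\uparrow_\bullet$, so $F^\uparrow_j=\Span\{f_1,\dots,f_j\}$ and $Xf_j=\lambda^\uparrow_j f_j$. Set $L=M\,\Span\{f_{i_1},\dots,f_{i_r}\}$. Then $M^{-1}L\cap F^\uparrow_{i_q}\supset\Span\{f_{i_1},\dots,f_{i_q}\}$, so $L\in\Omega_I(MF^\uparrow_\bullet(X))$.
\item For the second variety, use an eigenbasis $g_j$ with $F^\downarrow_j=\Span\{g_1,\dots,g_j\}$. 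Here $g_j$ has eigenvalue $\lambda^\uparrow_{d+1-j}$. Take $M^{-1}L=\Span\{g_{d+1-i_1},\dots,g_{d+1-i_r}\}$, which lies in $\Omega_{I^\vee}(F^\downarrow_\bullet)$ by the same check.
\end{itemize}
In both cases $X_{M^{-1}L}$ is diagonal with entries $\lambda^\uparrow_{i_1}(X),\dots,\lambda^\uparrow_{i_r}(X)$, so $\Theta_{X,\varphi}$ equals the left-hand side exactly. Hence the supremum and infimum are attained and are maxima and minima, which completes (1) and (2).
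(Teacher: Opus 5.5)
Your proposal is correct and follows the paper's proof essentially step for step. Both arguments transport the Schubert condition through $M^{-1}$, then apply Courant--Fischer to a $q$-dimensional subspace of $M^{-1}L\cap F^\uparrow_{i_q}$ (respectively an $(r+1-q)$-dimensional subspace of $M^{-1}L\cap F^\downarrow_{d+1-i_q}$), and attain the bound with the span of the selected eigenvectors. Your separate witness for the $\Omega_{I^\vee}$ variety is a small improvement: the paper's single witness $L_I$ implicitly assumes that both eigenflags come from the same eigenbasis, which matters when eigenvalues are degenerate.
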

\begin{proof}
    If $L\in\Omega_I(MF_\bullet^\uparrow(X))$, then $M^{-1}L\in\Omega_I(F_\bullet^\uparrow(X))$. For each $q\in[r]$ choose a $q-$dimensional subspace $V_q\subset M^{-1}L\cap F_{i_q}^\uparrow(X)$, then $\langle Xv,v\rangle\leq\lambda_{i_q}^\uparrow(X)\:\forall\:v\in V_q$. By Courant-Fischer minimax principle,
    \begin{align}
        \lambda_q^\uparrow(X_{M^{-1}L})\leq\max_{v\in V_q:\:||v||=1}\langle Xv,v\rangle\leq\lambda_{i_q}^\uparrow(X).
    \end{align}
    For attainment, choose an ordered eigenbasis $u_1,\ldots,u_d$ or $X$ and set $L_I=M\Span\{u_{i_1},\ldots,u_{i_r}\}$, then $X_{M^{-1}L_I}$ has eigenvalues $\lambda^\uparrow_{i_1}(X),\ldots,\lambda^\uparrow_{i_r}(X)$.
    
    If $L\in\Omega_{I^\vee}(MF_\bullet^\downarrow(
    X
    ))$, then $M^{-1}L\in\Omega_{I^\vee}(F_\bullet^\downarrow(X))$. For each $q\in[r]$ choose an $(r+1-q)$ dimensional subspace $W_q\subset M^{-1}L\cap F_{d+1-i_q}^\downarrow(X)$, then $\langle Xw,w\rangle\geq\lambda_{d+1-i_q}^\downarrow(X)=\lambda_{i_q}^\uparrow(X)\:\forall\:w\in W_q$. By Courant-Fischer principle again,
    \begin{align}
        \lambda_q^\uparrow(X_{M^{-1}L})\geq\min_{w\in W_q:\:||w||=1}\langle Xw,w\rangle\geq\lambda_{i_q}^\uparrow(X).
    \end{align}
\end{proof}

\begin{cor}[Linear Hersch-Zwahlen]
    For any $X\in\Herm(d)$ and $I\subset[d]$,
    \begin{align}
        \sum_{i\in I}\lambda_i^\uparrow(X)=\max_{L\in\Omega_I(F_\bullet^\uparrow(X))}\tr(X_L)=\min_{L\in\Omega_{I^\vee}(F_\bullet^\downarrow(X))}\tr(X_L).
    \end{align}
    This is the standard Hersch-Zwahlen variational formula which plays a key role in the proof of the Horn's conjecture \cite{Fulton2000EigenvaluesInvariantFactors}.
\end{cor}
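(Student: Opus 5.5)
The corollary is the special case of \cref{theorem: generalized Hersch-Zwahlen} with $M=\one$ and the scalarizer $\varphi(x_1,\dots,x_r)=x_1+\cdots+x_r$. The plan is to check that this $\varphi$ satisfies the hypotheses of part (1) and then identify $\Theta_{X,\varphi}$ with the trace of the compression.

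First, $\varphi$ is linear with nonnegative coefficients, so $x_i\le y_i$ for all $i$ gives $\varphi(x)\le\varphi(y)$. Hence $\varphi$ is coordinatewise nondecreasing, and part (1) of the theorem applies.

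Second, for any $L\in\Gr(r,d)$ we have
\begin{align}
\Theta_{X,\varphi}(L)=\sum_{q=1}^r\lambda_q^\uparrow(X_L)=\tr(X_L).
\end{align}
This does not depend on the choice of isometry $V_L$: two isometries with image $L$ differ by a unitary $W$ on $\bC^r$, and $\tr(W^\dagger V_L^\dagger XV_LW)=\tr(V_L^\dagger XV_L)$.

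Taking $M=\one$ gives $M^{-1}L=L$, and $MF_\bullet^\uparrow(X)=F_\bullet^\uparrow(X)$, $MF_\bullet^\downarrow(X)=F_\bullet^\downarrow(X)$. Part (1) then reads
\begin{align}
\sum_{q=1}^r\lambda_{i_q}^\uparrow(X)=\max_{L\in\Omega_I(F_\bullet^\uparrow(X))}\tr(X_L)=\min_{L\in\Omega_{I^\vee}(F_\bullet^\downarrow(X))}\tr(X_L),
\end{align}
which is exactly the claim.

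The only point needing care is that the minimum over $\Omega_{I^\vee}(F_\bullet^\downarrow(X))$ is actually attained, since the proof of the theorem only spells out attainment for the maximum. I would take the same span $L_I=\Span\{u_{i_1},\dots,u_{i_r}\}$, where $u_1,\dots,u_d$ is an eigenbasis ordered so that $Xu_j=\lambda_j^\uparrow(X)u_j$.

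To see that $L_I$ lies in this Schubert variety, note that $F_k^\downarrow(X)=\Span\{u_d,\dots,u_{d+1-k}\}$. The $q$-th element of $I^\vee$ is $d+1-i_{r+1-q}$, and
\begin{align}
L_I\cap F^\downarrow_{d+1-i_{r+1-q}}(X)\supseteq\Span\{u_{i_{r+1-q}},\dots,u_{i_r}\},
\end{align}
which has dimension $q$. So $L_I\in\Omega_{I^\vee}(F_\bullet^\downarrow(X))$, and $\tr(X_{L_I})=\sum_{i\in I}\lambda_i^\uparrow(X)$, so the minimum is attained.
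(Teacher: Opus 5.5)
Your proposal is correct and follows the paper's intended route: the corollary is \cref{theorem: generalized Hersch-Zwahlen}(1) with $M=\one$ and $\varphi(x)=\sum_j x_j$, the same specialization the paper uses again in \cref{lemma: ordinary nonlinear HZ}. Your explicit check that $L_I=\Span\{u_{i_1},\dots,u_{i_r}\}$ also lies in $\Omega_{I^\vee}(F_\bullet^\downarrow(X))$ is a useful addition, because the paper's proof of the theorem asserts attainment without verifying membership in that second Schubert variety.
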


\subsection{Lifting principle}
\begin{thm}\label{thm: lifting principle}
    Given an $n-$tuple operation $\Phi:\mc X^n\to\mc X$, a function $\Gamma:\bR^n\to\bR$ that is nondecreasing in each variable, together with\\
    (1) for each $I\subset[d]$, a map $\Lambda_I:\mc X\to\bR$;\\
    (2) for each $L\in\Gr(r,d)$, a map $\Theta_L:\mc X\to\bR$;\\
    (3) for each $X\in\mc X$, two complete flags $F^-_\bullet(X)$ and $F^+_\bullet(X)$.\\
    Assume the following axioms:\\
    (L1) If $L\in\Omega_{I^\vee}(F^-_\bullet(X))$, then $\Lambda_I(X)\leq\Theta_L(X)$; If $L\in\Omega_I(F^+_\bullet(X))$, then $\Theta_L(X)\leq\Lambda_I(X)$.\\
    (L2) $\Theta_L(\Phi(X_1,...,X_n))\geq\Gamma(\Theta_L(X_1),...,\Theta_L(X_n))\:\forall\:L\in\Gr(r,d)$.\\
    Then for every Horn triple $(I_1,...,I_n;K)$ in dimension $d$,
    \begin{align}
        \Lambda_K(\Phi(X_1,...,X_n))\geq\Gamma(\Lambda_{I_1}(X_1),...,\Lambda_{I_n}(X_n)).
    \end{align}
\end{thm}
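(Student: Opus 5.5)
The plan is to apply the Horn tuple definition to a well-chosen family of complete flags, so as to obtain a single subspace $L\in\Gr(r,d)$ to which both halves of axiom (L1) and axiom (L2) apply at once. Write $Y:=\Phi(X_1,\ldots,X_n)$. By the definition of a Horn tuple $(I_1,\ldots,I_n;K)$, for every choice of complete flags the intersection
\begin{align}
\Omega_{I_1^\vee}(E^{(1)}_\bullet)\cap\cdots\cap\Omega_{I_n^\vee}(E^{(n)}_\bullet)\cap\Omega_K(G_\bullet)
\end{align}
is nonempty. We will choose $E^{(t)}_\bullet:=F^-_\bullet(X_t)$ for $t=1,\ldots,n$ and $G_\bullet:=F^+_\bullet(Y)$. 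These are complete flags in $\bC^d$ by hypothesis (3), so there is some $L\in\Gr(r,d)$ lying in all of these Schubert varieties simultaneously.

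For this $L$ we apply (L1) twice. First, $L\in\Omega_{I_t^\vee}(F^-_\bullet(X_t))$ gives $\Lambda_{I_t}(X_t)\le\Theta_L(X_t)$ for each $t$. Second, $L\in\Omega_K(F^+_\bullet(Y))$ gives $\Theta_L(Y)\le\Lambda_K(Y)$. Now we chain these with (L2) and with the coordinatewise monotonicity of $\Gamma$:
\begin{align}
\Lambda_K(Y)\ \ge\ \Theta_L(Y)\ \ge\ \Gamma\bigl(\Theta_L(X_1),\ldots,\Theta_L(X_n)\bigr)\ \ge\ \Gamma\bigl(\Lambda_{I_1}(X_1),\ldots,\Lambda_{I_n}(X_n)\bigr).
\end{align}
This is exactly the claimed inequality.

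The argument is essentially formal. The only point needing care is bookkeeping: the dual index sets $I_t^\vee$ must be matched with the flags $F^-$, and $K$ with the flag $F^+$. This matching is precisely what makes the orientation of (L1) compatible with the Horn tuple definition; the other assignment would not give a usable chain. It also has to be the Horn property, which is a statement about all flags, that is used, since the flags here depend on the particular $X_t$ and $Y$. One should also note that (L2) is required for every $L$, because the subspace $L$ produced by the intersection property is not under our control.
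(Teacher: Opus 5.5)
Your proposal is correct and follows the paper's proof: pick $L$ in the intersection of the Schubert varieties $\Omega_{I_t^\vee}(F^-_\bullet(X_t))$ and $\Omega_K(F^+_\bullet(\Phi(X_1,\ldots,X_n)))$, which is nonempty by the Horn tuple property. Then chain (L1), (L2), and the coordinatewise monotonicity of $\Gamma$, exactly as the paper does.
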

\begin{proof}
    Choose $L\in\Omega_{I_1^\vee}(F^-_\bullet(X_1))\cap\cdots\cap \Omega_{I_n^\vee}(F^-_\bullet(X_n))\cap \Omega_K(F^+_\bullet(Y))$, then the axioms immediately imply
    \begin{align}
        \Lambda_K(\Phi(X_1,...,X_n))\geq\Theta_L(\Phi(X_1,...,X_n))\geq\Gamma(\Theta_L(X_1),...,\Theta_L(X_n))\geq\Gamma(\Lambda_{I_1}(X_1),...,\Lambda_{I_n}(X_n)).
    \end{align}
\end{proof}

The following is a concrete lifting principle which reduces the lifting to a concavity condition for a scalarizer function $\Theta_L$. In the next section, we will apply this principle to generalize the majorization inequalities by Aujla-Silva in \cref{theorem: Aujla-Silva Horn}, Carlen-Frank-Lieb-Zhang in \cref{theorem: Two-variable positive-map powers}, and geometric means in \cref{theorem: Horn inequalities for geodesic means}.

\begin{prop}\label{prop:unified lifting}
Let $\cE$ be a convex set, $Z:\cE\to\Herm(d)$ and $M:\cE\to GL(d,\bC)$ be two maps, and $\varphi:\bR^r\to\bR$ be coordinatewise nondecreasing. For every $r$-plane $L\in\Gr(r,d)$ define
\begin{align}
    \Theta_L(z):=\Theta_{Z(z),\varphi}\bigl(M(z)^{-1}L\bigr).
\end{align}
For $I=\{i_1<\cdots<i_r\}$ put
\begin{align}
    \Lambda_I(z):=\varphi\bigl(\lambda^\uparrow_{i_1}(Z(z)),\ldots,\lambda^\uparrow_{i_r}(Z(z))\bigr).
\end{align}
If $\Theta_L$ is concave for every $L\in\Gr(r,d)$, then for every Horn tuple $(I_1,\ldots,I_n;K)$ and every probability distribution $\{\alpha_t:1\leq t\leq n\}$,
\begin{align}\label{eq:abstract-horn}
    \Lambda_K\left(\sum_{t=1}^n\alpha_tz_t\right)\ge\sum_{t=1}^n\alpha_t\Lambda_{I_t}(z_t).
\end{align}
If $\varphi$ is coordinatewise nonincreasing, then put $\Lambda_I(z)\coloneqq\varphi(\lambda^\uparrow_{I^\vee}(Z(z)))$ and the claim still holds.
\end{prop}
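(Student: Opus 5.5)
The plan is to derive \cref{prop:unified lifting} from the abstract lifting principle \cref{thm: lifting principle}. We apply it with $\mc X=\cE$, the $n$-tuple operation $\Phi(z_1,\ldots,z_n):=\sum_t\alpha_tz_t$ (which lies in $\cE$ by convexity), and $\Gamma(x_1,\ldots,x_n):=\sum_t\alpha_tx_t$. Since $\alpha_t\ge0$, this $\Gamma$ is nondecreasing in each variable. The maps $\Lambda_I$ and $\Theta_L$ are those in the statement. For each $z$ we choose the flags
\begin{align}
F^+_\bullet(z):=M(z)F^\uparrow_\bullet(Z(z)),\qquad F^-_\bullet(z):=M(z)F^\downarrow_\bullet(Z(z)).
\end{align}
These are complete flags because $M(z)$ is invertible.

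The key step is to check axiom (L1) using \cref{theorem: generalized Hersch-Zwahlen}(1), applied with $X=Z(z)$ and $M=M(z)$. If $L\in\Omega_I(F^+_\bullet(z))$, the first displayed inequality of that theorem gives $\lambda^\uparrow_q(Z(z)_{M(z)^{-1}L})\le\lambda^\uparrow_{i_q}(Z(z))$ for all $q$. Since $\varphi$ is coordinatewise nondecreasing, this yields $\Theta_L(z)\le\Lambda_I(z)$. If instead $L\in\Omega_{I^\vee}(F^-_\bullet(z))$, the second inequality gives the reverse coordinatewise bound, hence $\Lambda_I(z)\le\Theta_L(z)$.

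Axiom (L2) reads $\Theta_L(\sum_t\alpha_tz_t)\ge\sum_t\alpha_t\Theta_L(z_t)$. This is exactly the assumed concavity of $\Theta_L$, used in its finite Jensen form, which follows from two-point concavity by induction. \Cref{thm: lifting principle} then gives \eqref{eq:abstract-horn} for every Horn tuple.

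It is worth spelling out the intersection step inside \cref{thm: lifting principle}. There one picks $L$ in $\bigcap_t\Omega_{I_t^\vee}(F^-_\bullet(z_t))\cap\Omega_K(F^+_\bullet(\bar z))$, where $\bar z=\sum_t\alpha_tz_t$. This intersection is nonempty because the Horn tuple condition holds for \emph{every} choice of complete flags, including these twisted flags $M(\cdot)F_\bullet$. This is the only place the $M$-twist matters, and it costs nothing.

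For the nonincreasing case, with $\Lambda_I(z):=\varphi(\lambda^\uparrow_{I^\vee}(Z(z)))$, I would run the same argument with the roles of the flags swapped. Take $F^+_\bullet(z):=M(z)F^\downarrow_\bullet(Z(z))$ and $F^-_\bullet(z):=M(z)F^\uparrow_\bullet(Z(z))$, and apply \cref{theorem: generalized Hersch-Zwahlen}(2) with index set $I^\vee$ in place of $I$, using $(I^\vee)^\vee=I$. If $L\in\Omega_I(M F^\downarrow_\bullet)=\Omega_{(I^\vee)^\vee}(MF^\downarrow_\bullet)$, the theorem gives the coordinatewise bound $\lambda^\uparrow_q(Z_{M^{-1}L})\ge\lambda^\uparrow_{(I^\vee)_q}(Z)$. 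Because $\varphi$ is nonincreasing, this gives $\Theta_L\le\Lambda_I$. Symmetrically, $L\in\Omega_{I^\vee}(MF^\uparrow_\bullet)$ gives $\Theta_L\ge\Lambda_I$. So (L1) holds again, and (L2) is the same concavity assumption as before.

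The main obstacle I expect is index bookkeeping in this dual case. One must confirm that the selected eigenvalues $\lambda^\uparrow_{I^\vee}$ match the flag conditions exactly, with no off-by-one reversal between increasing and decreasing flags. I would check this directly on the attaining subspace $M\Span\{u_i\}$ before relying on it.
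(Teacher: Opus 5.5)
Your proposal is correct and follows the paper's proof. Like the paper, you obtain (L1) from the generalized Hersch--Zwahlen theorem using the flags $M(z)F^\uparrow_\bullet(Z(z))$ and $M(z)F^\downarrow_\bullet(Z(z))$, obtain (L2) from concavity with $\Gamma(x)=\sum_t\alpha_t x_t$, and conclude with the lifting principle. Your treatment of the nonincreasing case is also correct: you swap the flags, set $F^+_\bullet=M F^\downarrow_\bullet$ and $F^-_\bullet=MF^\uparrow_\bullet$, and apply part (2) with index set $I^\vee$. The paper leaves this step implicit.
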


\begin{proof}
\cref{theorem: generalized Hersch-Zwahlen} gives (L1) in \cref{thm: lifting principle}:
\begin{align}
    \Lambda_I(z)=\max_{L\in\Omega_I(M(z)F_\bullet^\uparrow(Z(z)))}\Theta_L(z)=\min_{L\in\Omega_{I^\vee}(M(z)F_\bullet^\downarrow(Z(z)))}\Theta_L(z).
\end{align}
The assumed concavity gives (L2) with $\Gamma(x_1,\ldots,x_n)=\sum_t\alpha_tx_t$. Therefore \cref{thm: lifting principle} gives \eqref{eq:abstract-horn} with $\Phi(z_1,\ldots,z_n)=\sum_t\alpha_t z_t$.
\end{proof}

\section{Additive Horn families}
\subsection{Convex function family}
In this section, we apply the lifting principle to the weak majorization inequalities for convex functions in \cite{Aujla2003WeakMajorization}. We first prove a general additive Hersch-Zwahlen formula and lifting lemma:

\begin{lem}[Additive Hersch-Zwahlen] \label{lemma: ordinary nonlinear HZ}
    Given nondecreasing function $h:\bR\to[0,\infty)$ and $I\subset[d]$ with $|I|=r$, for any $X\in\Herm(d)$ we have
    \begin{align}
    \sum_{i\in I} h\bigl(\lambda_i^\uparrow(X)\bigr) = \max_{L\in \Omega_I(F_\bullet^\uparrow(X))} \tr\:h(X_L) = \min_{L\in \Omega_{I^\vee}(F_\bullet^\downarrow(X))} \tr\:h(X_L).
    \end{align}
    If $h$ is nonincreasing, then
    \begin{align}
        \sum_{i\in I} h\bigl(\lambda_i^\uparrow(X)\bigr) = \min_{L\in \Omega_I(F_\bullet^\uparrow(X))} \tr\:h(X_L) = \max_{L\in \Omega_{I^\vee}(F_\bullet^\downarrow(X))} \tr\:h(X_L).
    \end{align}
\end{lem}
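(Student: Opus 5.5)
This is a direct specialization of \cref{theorem: generalized Hersch-Zwahlen} with $M=\one$ and the separable scalarizer
\begin{align}
    \varphi(x_1,\ldots,x_r):=\sum_{q=1}^r h(x_q).
\end{align}
If $h$ is nondecreasing, then $\varphi$ is coordinatewise nondecreasing. If $h$ is nonincreasing, then $\varphi$ is coordinatewise nonincreasing. So the plan is to identify $\Theta_{X,\varphi}(L)$ with $\tr\,h(X_L)$ and read off parts (1) and (2) of the theorem.

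The first step is the identification. For $L\in\Gr(r,d)$, the compression $X_L=V_L^\dagger XV_L$ is an $r\times r$ Hermitian matrix with eigenvalues $\lambda_1^\uparrow(X_L),\ldots,\lambda_r^\uparrow(X_L)$. The functional calculus then gives
\begin{align}
    \tr\,h(X_L)=\sum_{q=1}^r h\bigl(\lambda_q^\uparrow(X_L)\bigr)=\Theta_{X,\varphi}(L).
\end{align}
This quantity does not depend on the choice of isometry $V_L$, because a different choice changes $X_L$ only by unitary conjugation. The second step is to note that $\varphi(\lambda^\uparrow_{i_1}(X),\ldots,\lambda^\uparrow_{i_r}(X))=\sum_{i\in I}h(\lambda_i^\uparrow(X))$.

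With $M=\one$, part (1) of \cref{theorem: generalized Hersch-Zwahlen} gives the max over $\Omega_I(F_\bullet^\uparrow(X))$ and the min over $\Omega_{I^\vee}(F_\bullet^\downarrow(X))$ in the nondecreasing case. Part (2) gives the swapped min/max in the nonincreasing case.

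The argument relies on the attainment claims inside \cref{theorem: generalized Hersch-Zwahlen}, so I will check them for both Schubert varieties. For the max over $\Omega_I(F^\uparrow_\bullet(X))$, the plane $\Span\{u_{i_1},\ldots,u_{i_r}\}$ built from an increasing eigenbasis lies in that Schubert variety, since $\dim(L\cap F^\uparrow_{i_q})\ge q$ holds by construction. For the min over $\Omega_{I^\vee}(F^\downarrow_\bullet(X))$, the same plane works. In the decreasing flag, $u_{i_q}$ sits at position $d+1-i_q$, so for the $q$-th smallest element $d+1-i_{r+1-q}$ of $I^\vee$ the plane meets $F^\downarrow_{d+1-i_{r+1-q}}$ in at least $q$ dimensions. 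At that plane both inequalities of the theorem become equalities, so $\varphi$ is attained in both formulas.

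Given the theorem, I expect no real obstacle. The only points needing care are the eigenvalue identification under functional calculus and the attainment in the $I^\vee$ formula, which is only sketched in the theorem's proof; the verification above covers it. The hypothesis $h\ge0$ plays no role here.
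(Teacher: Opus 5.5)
Your proposal is correct and follows the paper's proof exactly: specialize \cref{theorem: generalized Hersch-Zwahlen} with $M=\one$ and $\varphi(x_1,\ldots,x_r)=\sum_q h(x_q)$, identify $\Theta_{X,\varphi}(L)=\tr\,h(X_L)$, and read off parts (1) and (2). Your extra check that $\Span\{u_{i_1},\ldots,u_{i_r}\}$ lies in $\Omega_{I^\vee}(F^\downarrow_\bullet(X))$ is also right, and it fills in a step the paper leaves implicit in the theorem's proof; if eigenvalues repeat, take the eigenbasis adapted to the given decreasing flag.
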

\begin{proof}
    Let $\varphi_h(x_1,...,x_r):=\sum_{j=1}^{r}h(x_j)$ and $M=\one$, since $h$ is nondecreasing, $\varphi_h$ is coordinatewise nondecreasing and $\Theta_{X,\varphi_h}(L)=\tr h(X_L)$, by \cref{theorem: generalized Hersch-Zwahlen},
    \begin{align}
    \sum_{i\in I} h\bigl(\lambda_i^\uparrow(X)\bigr) = \varphi_h\big(\lambda_{i_1}^\uparrow(X),...,\lambda_{i_r}^\uparrow(X)\big) = \max_{L\in \Omega_I(F_\bullet^\uparrow(X))} \tr\:h(X_L) = \min_{L\in \Omega_{I^\vee}(F_\bullet^\downarrow(X))} \tr\:h(X_L).
    \end{align}
\end{proof}

\begin{lem} \label{lemma: compressed-trace Horn}
Given $F:\bM_d\to\bM_{d'}$, let $I_F$ be an interval containing $\bigcup_{X\in\bM_d^+}\spec(F(X))$, $h:I_F\to\bR$ be nondecreasing and $T(X):=h\bigl(F(X)\bigr)$. Then for every $X_1,...,X_n\in \bM_d^+$ and every probability distribution $\{\alpha_t:1\leq t\leq n\}$:\\
(1) If $X\mapsto\tr\:h(F(X)_L)$ is concave for all $L$, then for every Horn tuple $(I_1,...,I_n;K)$ in dimension $d'$,
\begin{align}
\sum_t\alpha_t\sum_{i_t\in I_t}\lambda_{i_t}^\uparrow\bigl(T(X_t)\bigr)\leq\sum_{k\in K}\lambda_k^\uparrow\bigl(T(\sum_t\alpha_tX_t)\bigr).
\end{align}
Equivalently, there exist unitaries $U_t$ such that 
    \begin{align}
        T[\sum_t\alpha_tX_t]\geq\sum_t\alpha_tU_tT(X_t)U_t^\dagger.
    \end{align}
(2) If $X\mapsto\tr\:h(F(X)_L)$ is convex for all $L$, then
\begin{align}
    \sum_t\alpha_t\sum_{i_t\in I_t}\lambda_{i_t}^\downarrow\bigl(T(X_t)\bigr)\geq\sum_{k\in K}\lambda_k^\downarrow\bigl(T(\sum_t\alpha_tX_t)\bigr).
\end{align}
Equivalently, there exist unitaries $U_t$ such that 
    \begin{align}
        T[\sum_t\alpha_tX_t]\leq\sum_t\alpha_tU_tT(X_t)U_t^\dagger.
    \end{align}
\end{lem}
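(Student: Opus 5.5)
The plan is to apply \cref{prop:unified lifting} with $\cE=\bM_d^+$ (convex), $Z(X):=F(X)\in\Herm(d')$, $M\equiv\one$, and $\varphi_h(x_1,\dots,x_r):=\sum_j h(x_j)$. Since $h$ is nondecreasing, $\varphi_h$ is coordinatewise nondecreasing. We also have $\Theta_{F(X),\varphi_h}(L)=\tr h(F(X)_L)$. This holds because $\spec(F(X)_L)$ lies in the convex hull of $\spec(F(X))$ by Cauchy interlacing, so $h(F(X)_L)$ is well defined on $I_F$. Moreover $\Lambda_I(X)=\sum_{i\in I}h(\lambda_i^\uparrow(F(X)))=\sum_{i\in I}\lambda_i^\uparrow(T(X))$, where the last equality uses monotonicity of $h$: the eigenvalues of $h(F(X))$ in increasing order are $h(\lambda_i^\uparrow(F(X)))$, with ties harmless. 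This is just \cref{lemma: ordinary nonlinear HZ}, except that here $h$ may take negative values. That is no obstruction, since \cref{theorem: generalized Hersch-Zwahlen} only needs coordinatewise monotonicity.

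For part (1), the hypothesis is exactly concavity of $\Theta_L$ for every $L$. \cref{prop:unified lifting} then yields $\Lambda_K(\sum_t\alpha_tX_t)\ge\sum_t\alpha_t\Lambda_{I_t}(X_t)$ for every Horn tuple in dimension $d'$, which is the displayed increasing-eigenvalue inequality. For part (2), I will apply the same proposition to $-\Theta_L$, i.e.\ to $\varphi=-\varphi_h$, which is coordinatewise nonincreasing. Equivalently, I will rerun \cref{thm: lifting principle} with reversed inequalities. The nonincreasing clause of \cref{prop:unified lifting} uses $\Lambda_I=\varphi(\lambda^\uparrow_{I^\vee})$, and $\lambda^\uparrow_{d+1-i}=\lambda^\downarrow_i$ converts $\sum_{i\in I^\vee}\lambda_i^\uparrow$ into $\sum_{i\in I}\lambda_i^\downarrow$. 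This gives $\sum_{k\in K}\lambda_k^\downarrow(T(\bar X))\le\sum_t\alpha_t\sum_{i\in I_t}\lambda_i^\downarrow(T(X_t))$.

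The unitary-orbit equivalences come from \cref{theorem: Fulton's Horn inequalities}. For (2), take $C=T(\bar X)$ and $A_t=\alpha_tT(X_t)$. Scaling by $\alpha_t\ge0$ preserves the decreasing order, so the decreasing-order Horn inequalities for all Horn tuples are exactly condition (2) of Fulton's theorem. That theorem gives matrices with the prescribed spectra satisfying $C'\le\sum A_t'$, and conjugating by the unitary that carries $C'$ to $T(\bar X)$ produces unitaries $U_t$ with $T(\bar X)\le\sum_t\alpha_tU_tT(X_t)U_t^\dagger$. The converse direction is the other implication of Fulton's theorem. For (1), I apply Fulton to $-T(\bar X)\le\sum_t\alpha_t(-U_tT(X_t)U_t^\dagger)$, using $\lambda^\downarrow_k(-Y)=-\lambda^\uparrow_{k}(Y)$ to match indices.

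The main obstacle is bookkeeping of index conventions in part (1). Fulton's statement is phrased in decreasing order, while (1) uses increasing order with the same $K,I_t$. I need to check that negation turns the increasing inequality over $(I_1,\dots,I_n;K)$ into the decreasing one over the same tuple, which holds by the identity above. If not, I would instead appeal to the symmetry of the Horn tuple set under $I\mapsto I^\vee$ for all entries, which follows from the flag definition by taking orthogonal complements or reversing flags. A lesser point is making sure concavity of $\Theta_L$ is needed only at the single $L$ chosen in the proof of \cref{thm: lifting principle}; this is automatic since the hypothesis is assumed for all $L$.
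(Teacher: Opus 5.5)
Your proposal is correct and follows the paper's proof. Part (1) is exactly the paper's instantiation of \cref{prop:unified lifting} with $Z=F$, $M=\one$, $\varphi=\sum_j h(x_j)$. Your handling of part (2) through the nonincreasing clause, where $\Lambda_I=-\sum_{i\in I^\vee}\lambda_i^\uparrow(T)=-\sum_{i\in I}\lambda_i^\downarrow(T)$, and of the unitary-orbit equivalences via \cref{theorem: Fulton's Horn inequalities} and negation, correctly fills in what the paper leaves implicit. You should drop your fallback, though: the set of Horn tuples is not invariant under $I\mapsto I^\vee$ in all entries. For example, with $d=3$ and $r=1$, $(\{1\},\{1\};\{1\})$ is a Horn tuple but $(\{3\},\{3\};\{3\})$ is not. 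You never need the fallback anyway, since $\lambda_k^\downarrow(-Y)=-\lambda_k^\uparrow(Y)$ already matches the indices.
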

\begin{proof}
    Let $\varphi(x_1,\ldots,x_r):=\sum_j h(x_j)$, $Z(X):=F(X)$ and $M:=\one$, then $\Theta_L(X)=\tr\:h(F(X)_L)$ is concave by assumption, and $\Lambda_I(X)=\sum_{i\in I}h(\lambda_i^\uparrow(F(X)))=\sum_{i\in I}\lambda_i^\uparrow(T(X))$ since $h$ is nondecreasing, so \cref{prop:unified lifting} gives 
    \begin{align}
        \sum_t\alpha_t\sum_{i_t\in I_t}\lambda_{i_t}^\uparrow\bigl(T(X_t)\bigr)\leq\sum_{k\in K}\lambda_k^\uparrow\bigl(T(\sum_t\alpha_tX_t)\bigr).
    \end{align}
\end{proof}

Now we are consider convex functions and generalize the Ky Fan inequalities for convex functions in \cite{Aujla2003WeakMajorization} to unitary-orbit inequalities:

\begin{thm} \label{theorem: Aujla-Silva Horn}
    Given $X_t\in\bM_d^+$, probability distribution $\{\alpha_t\}$ and convex function $f:[0,\infty)\to[0,+\infty)$, 
    \begin{align}
        \sum_{k\in K}\lambda^\downarrow_k\big(f(\sum_{t=1}^{n}\alpha_tX_t)\big)\leq\sum_{t=1}^{n}\alpha_t\sum_{i_t\in I_t}\lambda^\downarrow_{i_t}\big(f(X_t)\big)
    \end{align}
    for all Horn tuple $(I_1,...,I_n;K)$ in dimension $d$. Equivalently, there exist unitaries $U_t$ such that
    \begin{align}
        f(\sum_t\alpha_tX_t)\leq\sum_t\alpha_tU_tf(X_t)U_t^\dagger.
    \end{align}
\end{thm}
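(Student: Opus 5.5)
The plan is to obtain the statement from \cref{lemma: compressed-trace Horn}(2) with $F=\id$ and $h=f$, so that $T(X)=f(X)$. The only analytic input is convexity of the scalarizer: for each $L\in\Gr(r,d)$ I want $X\mapsto\tr f(X_L)$ to be convex on $\bM_d^+$. This holds because $X\mapsto X_L=V_L^\dagger XV_L$ is linear and maps $\bM_d^+$ into $\bM_r^+$, where $f$ is defined. By the Peierls/von Neumann trace-convexity theorem, $Y\mapsto\tr f(Y)$ is convex on $\bM_r^+$ for any scalar convex $f$, and the composition of a convex function with a linear map is convex. So, as far as the lifting machinery goes, the hypothesis of \cref{lemma: compressed-trace Horn}(2) holds for every $L$ and every Horn tuple, in full generality.

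\emph{Monotone case.} First assume $f$ is nondecreasing. Then \cref{lemma: compressed-trace Horn}(2) applies verbatim. Its proof via \cref{prop:unified lifting} with $\varphi=\sum_j f(x_j)$, $Z=\id$, $M=\one$ and the reversed (convex) inequality gives
\[
\sum_{k\in K}\lambda_k^\downarrow\bigl(f(\textstyle\sum_t\alpha_tX_t)\bigr)\le\sum_t\alpha_t\sum_{i_t\in I_t}\lambda_{i_t}^\downarrow\bigl(f(X_t)\bigr).
\]
The key identification is $\sum_{i\in I}f(\lambda_i^\downarrow(X))=\sum_{i\in I}\lambda_i^\downarrow(f(X))$, which holds because $f$ preserves the order of eigenvalues. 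The unitary-orbit form then follows from \cref{theorem: Fulton's Horn inequalities}, applied with $C=f(\sum_t\alpha_tX_t)$ and $A_t=\alpha_tf(X_t)$; positive scaling by $\alpha_t$ does not change which eigenvalue indices are selected.

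\emph{Nonincreasing case.} If $f$ is nonincreasing, I use the nonincreasing branch of \cref{prop:unified lifting}, with $\Lambda_I$ defined through $I^\vee$. Now $f$ reverses the eigenvalue order, so $\sum_{i\in I^\vee}f(\lambda^\uparrow_i(X))=\sum_{i\in I}\lambda^\downarrow_i(f(X))$. The same convexity of $\tr f(X_L)$ then yields the displayed inequality again.

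\emph{General convex $f$: the main obstacle.} Here $f$ decreases on $[0,c]$ and increases on $[c,\infty)$, so the eigenvalues of $f(X)$ are no longer a monotone relabelling of those of $X$. Consequently $\Lambda_I$ is not of the form required in \cref{prop:unified lifting}. I would first try to choose, for each $X$, adapted flags $F^\pm_\bullet(X)$ taken from an eigenbasis of $f(X)$ ordered by the values $f(\lambda_i(X))$. The goal is to verify axiom (L1) of \cref{thm: lifting principle} directly for $\Lambda_I(X)=\sum_{i\in I}\lambda_i^\downarrow(f(X))$ and $\Theta_L(X)=\tr f(X_L)$, while (L2) is exactly the trace convexity above. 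The difficulty is that a Courant--Fischer comparison between $f(X_L)$ and the compression $f(X)_L$ requires monotonicity. Without it one has only Jensen-type bounds $\tr f(X_L)\le\tr f(X)_L$, which serve one direction of (L1) but not the other.

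If this adapted-flag argument fails, my fallback is to check whether the general statement actually holds with a single unitary per summand. The Bourin--Lee result cited in the introduction uses two unitaries for non-monotone $f$, which suggests that for general convex $f$ the theorem may hold only under an implicit monotonicity assumption on $f$. Resolving that point is the main step I cannot settle in advance.
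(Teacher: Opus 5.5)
Your nondecreasing case is the paper's argument. The proposal still has a genuine gap: the statement covers \emph{every} convex $f\ge 0$, and you prove nothing beyond monotone $f$, even suggesting that a monotonicity hypothesis might be needed. It is not needed.

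Your diagnosis of the obstacle is correct. With the scalarizer $\tr f(X_L)$, axiom (L1) really does fail for non-monotone $f$. For $K=\{d\}$ the Schubert condition on $L$ is vacuous, and a compression $X_L$ can sit at the minimum of $f$ while the smallest eigenvalue of $f(X)$ is strictly larger. So no choice of adapted flags can rescue that route. The missing idea is to split $f$ into monotone pieces and use a dimension-doubling trick.
\begin{itemize}
\item Let $R$ bound the spectra of all $X_t$, hence of $\bar X:=\sum_t\alpha_tX_t$, and let $x_0$ minimize $f$ on $[0,R]$.
\item On $[0,R]$ write $f=f(x_0)+f_++f_-$, where $f_+$ is nondecreasing, convex and vanishes on $[0,x_0]$, and $f_-$ is nonincreasing, convex and vanishes on $[x_0,R]$.
\item The monotone case gives unitaries $U_t^\pm$ with $f_\pm(\bar X)\le\sum_t\alpha_tU_t^\pm f_\pm(X_t)U_t^{\pm\dagger}$.
\item Placing these in two diagonal blocks gives a unitary-orbit inequality in dimension $2d$ for $Y\mapsto f_+(Y)\oplus f_-(Y)$. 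By \cref{theorem: Fulton's Horn inequalities}, all Horn inequalities in dimension $2d$ follow.
\item Since $\{f_+(x),f_-(x)\}=\{0,f(x)-f(x_0)\}$ and $f(x)-f(x_0)\ge 0$ on $[0,R]$, the $d$ largest eigenvalues of $f_+(Y)\oplus f_-(Y)$ are exactly $\lambda_i^\downarrow(f(Y))-f(x_0)$ for $1\le i\le d$.
\item A Horn tuple in dimension $d$ remains a Horn tuple in dimension $2d$. Restrict to such tuples and cancel $|K|f(x_0)=\sum_t\alpha_t|I_t|f(x_0)$, using $\sum_t\alpha_t=1$. This gives the required Horn inequalities in dimension $d$.
\item Fulton's theorem then converts these into $f(\bar X)\le\sum_t\alpha_tU_tf(X_t)U_t^\dagger$.
\end{itemize}
The Bourin--Lee two-unitary bound is not in tension with this. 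There both unitaries conjugate the whole mixture $\sum_t\alpha_tf(X_t)$, whereas here each summand gets its own unitary.

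Your nonincreasing case also needs repair. For nonincreasing $f$,
\[
\sum_{i\in I^\vee}f(\lambda_i^\uparrow(X))=\sum_{i\in I}f(\lambda_i^\downarrow(X))=\sum_{i\in I}\lambda_i^\uparrow(f(X)),
\]
not $\sum_{i\in I}\lambda_i^\downarrow(f(X))$. Moreover, the nonincreasing branch of \cref{prop:unified lifting} requires a concave scalarizer, while $\tr f(X_L)$ is convex. The correct bookkeeping takes $\varphi=-\sum_jf(x_j)$, which is coordinatewise nondecreasing when $f$ is nonincreasing. The nondecreasing branch then gives $\Lambda_I=-\sum_{i\in I}f(\lambda_i^\uparrow(X))=-\sum_{i\in I}\lambda_i^\downarrow(f(X))$ with the concave scalarizer $-\tr f(X_L)$. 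Alternatively, as in the paper, take $F=-\id$ and $h(t)=f(-t)$ in \cref{lemma: compressed-trace Horn} to reduce to the nondecreasing case.
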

\begin{proof}
    We first assume $f$ is nondecreasing. Let $h=f$, $F=\id$, then $\Theta_L(X)=\tr\:f(X_L)$ is convex since $f$ is convex. By \cref{lemma: compressed-trace Horn}, 
    \begin{align}
        \sum_{k\in K}\lambda^\downarrow_k\big(f(\sum_{t=1}^{n}\alpha_tX_t)\big)\leq\sum_{t=1}^{n}\alpha_t\sum_{i_t\in I_t}\lambda^\downarrow_{i_t}\big(f(X_t)\big).
    \end{align}
    Equivalently, there exist unitaries $U_t$ such that
    \begin{align}
        f(\sum_t\alpha_tX_t)\leq\sum_t\alpha_tU_t f(X_t)U_t^\dagger.
    \end{align}
    If $f$ is nonincreasing, let $h(t)=f(-t)$ and $F=-\id$, then $h$ is nondecreasing and $\Theta_L(X)=\tr\:f(X_L)$ is still convex, so the same argument applies.\\
    
    Now, for generic convex function $f$, let $R$ be the largest eigenvalue among $X_t$ and $x_0=\arg\min_{x\in[0,R]}f(x)$, define $f_+(x):=\begin{cases}
        0 & t\leq x_0\\
        f(x)-f(x_0) & t\geq x_0
    \end{cases}$ and $f_-(x)=\begin{cases}
        f(x)-f(x_0) & t\leq x_0\\
        0 & x\geq x_0
    \end{cases}$, then $f_+,f_-$ are both convex and monotone, and $f(x)=f(x_0)+f_+(x)+f_-(x)$. By monotonicity, we get unitary orbit inequalities for $f_+$ and $f_-$ respectively, putting them in two blocks we get
    \begin{align}
        (f_+\oplus f_-)(\sum_t\alpha_tX_t)\leq\sum_t\alpha_t(U_t^+\oplus U_t^-)\big(f_+(X_t)\oplus f_-(X_t)\big)(U_t^+\oplus U_t^-)^\dagger
    \end{align}
    where $X\oplus Y=\text{diag}(X,Y)$. Finally, $\sum_{i\in I}\lambda_i^\downarrow\big(f_+(X)\oplus f_-(X)\big)=\sum_{i\in I}\lambda_i^\downarrow(f(X))-|I|f(x_0)$ because $\{f_-(x),f_+(x)\}=\{0,f(x)-f(x_0)\}$ $\forall$ $x$, since Horn tuples in dimension $d$ are also Horn tuples in dimension $2d$, we still have
    \begin{align}
        \sum_{k\in K}\lambda^\downarrow_k\big(f(\sum_{t=1}^{n}\alpha_tX_t)\big)\leq\sum_{t=1}^{n}\alpha_t\sum_{i_t\in I_t}\lambda^\downarrow_{i_t}\big(f(X_t)\big).
    \end{align}
\end{proof}

\subsection{Two-variable positive power family}
In this section, we consider the following three parameter regions in Zhang's theorem \cite{Zhang2020WYDCarlenFrankLieb}:\\
Region one: $\{0\leq p,q\leq 1;\:0<s\leq\frac{1}{p+q}\}\cup\{-1\leq p,q\leq0;\:\frac{1}{p+q}\leq s<0\}$.\\
Region two: $\{0\leq p,q\leq 1;\:s<0\}\cup\{-1\leq p,q\leq0;\:s>0\}$.\\
Region three: $\{-1\leq p\leq 0;\:1\leq q\leq 2; s\geq\frac{1}{p+q}\}\cup\{-1\leq q\leq 0;\:1\leq p\leq 2;\:s\geq\frac{1}{p+q}\}$.

\begin{thm} \label{theorem: Two-variable positive-map powers}
    Let $\Phi:\bP_{d_1}\to\bP_d$ and $\Psi:\bP_{d_2}\to\bP_d$ be positive linear maps, denote 
    \begin{align}
        T(X,Y):=\bigg(\Phi(X^p)^\frac{1}{2}\Psi(Y^q)\Phi(X^p)^\frac{1}{2}\bigg)^s.    
    \end{align}
    For every $X_1,...,X_n\in\bP_{d_1}$, $Y_1,...,Y_n\in\bP_{d_2}$ and every probability distribution $\{\alpha_t:1\leq t\leq n\}$:\\
    (1) If $(p,q,s)$ is in Region one, then for every Horn tuple $(I_1,...,I_n;K)$ in dimension $d$,
    \begin{align}
        \sum_{k\in K}\lambda^\uparrow_k\big(T(\sum_{t=1}^{n}\alpha_tX_t,\sum_{t=1}^{n}\alpha_tY_t)\big)\geq\sum_{t=1}^{n}\alpha_t\sum_{i_t\in I_t}\lambda^\uparrow_{i_t}\big(T(X_t,Y_t)\big).
    \end{align}
    Equivalently, there exist unitaries $U_t$ such that
    \begin{align}
        T(\sum_t\alpha_tX_t,\sum_t\alpha_tY_t)\geq\sum_t\alpha_tU_t T(X_t,Y_t)U_t^\dagger.
    \end{align}
    (2) If $(p,q,s)$ is in Regions two or three, then for every Horn tuple $(I_1,...,I_n;K)$ in dimension $d$,
    \begin{align}
        \sum_{k\in K}\lambda^\downarrow_k\big(T(\sum_{t=1}^{n}\alpha_tX_t,\sum_{t=1}^{n}\alpha_tY_t)\big)\leq\sum_{t=1}^{n}\alpha_t\sum_{i_t\in I_t}\lambda^\downarrow_{i_t}\big(T(X_t,Y_t)\big).
    \end{align}
    Equivalently, there exist unitaries $U_t$ such that
    \begin{align}
        T(\sum_t\alpha_tX_t,\sum_t\alpha_tY_t)\leq\sum_t\alpha_tU_t T(X_t,Y_t)U_t^\dagger.
    \end{align}
\end{thm}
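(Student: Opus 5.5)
We apply \cref{prop:unified lifting} on the convex set $\cE=\bP_{d_1}\times\bP_{d_2}$ with $z=(X,Y)$. The key choice is a representation of $T(X,Y)$, up to a congruence $M(z)$, whose compressions are the matrices appearing in \cref{lem:shorted-epstein} and \cref{lem:engine II}.

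Write $A=\Phi(X^p)$ and $B=\Psi(Y^q)$. Then $A^{1/2}BA^{1/2}$ is similar to $BA$, and hence congruent to $B^{1/2}AB^{1/2}$. We take $Z(z)=A^{-1}\#B$-type data, but it is simpler to use the congruence directly. Set $M(z)=A^{-1/2}$, which lies in $GL(d,\bC)$. For $L\in\Gr(r,d)$ choose an isometry $V$ onto $L$, and note that $M(z)^{-1}L=A^{1/2}L$. The eigenvalues of the compression of $A^{1/2}BA^{1/2}$ to $A^{1/2}L$ coincide with those of the pencil $(V^*A V)^{-1/2}V^*ABAV(V^*AV)^{-1/2}$. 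That is not quite the desired form, so instead I would take $M(z)=A^{1/2}$, giving $M(z)^{-1}L=A^{-1/2}L$. An orthonormal frame for this space is $A^{-1/2}V(V^*A^{-1}V)^{-1/2}$. Compressing $A^{1/2}BA^{1/2}$ to it gives
\begin{align}
(V^*A^{-1}V)^{-1/2}\,V^*BV\,(V^*A^{-1}V)^{-1/2},
\end{align}
which is exactly the matrix in \cref{lem:shorted-epstein}(1) and \cref{lem:engine II}.

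Accordingly we set $Z(z)=A^{1/2}BA^{1/2}$ and $\varphi(x)=\sum_j x_j^s$. Then $\Lambda_I(z)=\sum_{i\in I}\lambda_i(T(z))$, with the index set reversed to $I^\vee$ when $s<0$ (the nonincreasing clause of \cref{prop:unified lifting}). The scalarizer is $\Theta_L(z)=\tr\big[(\text{compression})^s\big]$.

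The cases split as follows. For Region one with $p,q\ge0$, $\Theta_L$ is jointly concave by \cref{lem:shorted-epstein}(1). For Region one with $p,q\le0$, $s<0$, rewrite $T$ in terms of $(A^{-1})$ and $(B^{-1})$ with exponent $-s>0$. This inverts $T$ and flips the ordering; the scalarizer with $B^{-1}$ compressed is then \cref{lem:shorted-epstein}(2), possibly after choosing $M(z)=A^{-1/2}$ instead so that $V^*AV$ appears. The resulting concave trace in $-s$ yields the inequality for $\lambda^\uparrow$ of $T$ after reversal. For Region two, the compression for $p,q\in[0,1]$ has the form $P^{1/2}QP^{1/2}$ with $P=(V^*\Phi(X^p)^{-1}V)^{-1}$ and $Q=V^*\Psi(Y^q)V$. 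Both maps are operator concave: $Q$ because $x^q$ is operator concave, and $P$ by \cref{lem: order}(1) combined with the concavity of the shorted operator $\inf_W W^*(\cdot)W$. So \cref{lem:engine I} gives convexity for $s<0$, and the negative-exponent case follows from the same inversion trick. For Region three, \cref{lem:engine II} gives convexity directly; the symmetric case with $p\in[1,2]$ is handled by exchanging the roles of $X$ and $Y$, using that $A^{1/2}BA^{1/2}$ and $B^{1/2}AB^{1/2}$ are unitarily similar. In the convex cases we apply \cref{prop:unified lifting} to $-\Theta_L$, which turns $\lambda^\uparrow$ into $\lambda^\downarrow$ statements. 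The unitary-orbit forms then follow from \cref{theorem: Fulton's Horn inequalities} applied to $T$.

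The main obstacle is the bookkeeping of $\varphi$'s monotonicity against the sign of $s$, and the inversions needed in the negative-exponent subregions. One must check in each case that the appropriate Hersch--Zwahlen extremum (max versus min, $I$ versus $I^\vee$) matches the concavity direction, so that the final index sets come out as $\lambda^\uparrow$ in (1) and $\lambda^\downarrow$ in (2). A second point to verify is that the scalarizer is independent of the choice of the frame $V$, which holds because the trace is invariant under unitary change of the frame.
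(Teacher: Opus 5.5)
Your proposal is correct and follows the paper's proof essentially step for step. You use the same choices $M=A^{1/2}$, $Z=A^{1/2}BA^{1/2}$, switching to $M=A^{-1/2}$, $Z=(A^{1/2}BA^{1/2})^{-1}$ when $p,q\le 0$, with \cref{lem:shorted-epstein}, \cref{lem:engine I} and \cref{lem:engine II} supplying the concavity or convexity of the compressed scalarizer, and the $X\leftrightarrow Y$ symmetry for the other half of Region three. One small misattribution: for $p,q\in[0,1]$ in Region two, concavity of $X\mapsto(V^*\Phi(X^p)^{-1}V)^{-1}$ follows from operator concavity of $x^p$ plus the shorted-operator argument, while \cref{lem: order}(1) is the input actually needed in the inverted subcase $p,q\in[-1,0]$, $s>0$.
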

\begin{proof}
    Write $A(X):=\Phi(X^p)$, $B(Y):=\Psi(Y^q)$. For any $L$ choose isometry $V$ with $\im(V)=L$.\\
    (1) Set $\Lambda_I(X,Y)=\sum_{i\in I}\lambda_i^\uparrow\big(T(X,Y)\big)$.
    
    If $0\leq p,q\leq 1;\:0<s\leq\frac{1}{p+q}$: Set $M(X,Y):=A(X)^\frac{1}{2}$, $Z(X,Y):=A(X)^\frac{1}{2}B(Y)A(X)^\frac{1}{2}$ and $\varphi(x_1,\ldots,x_r)=\sum_j x_j^s$, then $\Theta_L(X,Y)=\tr\left[\left((V^*\Phi(X^p)^{-1}V)^{-\frac{1}{2}}V^*\Psi(Y^q)V(V^*\Phi(X^p)^{-1}V)^{-\frac{1}{2}}\right)^s\right]$. By \cref{lem:shorted-epstein}(1), $\Theta_L$ is jointly concave, therefore \cref{prop:unified lifting} proves the claim.

    If $-1\leq p,q\leq 0;\:\frac{1}{p+q}\leq s\leq 0$: Set $M(X,Y):=A(X)^{-\frac{1}{2}}$, $Z(X,Y):=(A(X)^\frac{1}{2}B(Y)A(X)^\frac{1}{2})^{-1}$ and $\varphi(x_1,\ldots,x_r)=\sum_j x_j^{-s}$, then $\Theta_L(X,Y)=\tr\left[\left((V^*\Phi(X^p)V)^{-\frac{1}{2}}V^*\Psi(Y^q)^{-1}V(V^*\Phi(X^p)V)^{-\frac{1}{2}}\right)^{-s}\right]$. By \cref{lem:shorted-epstein}(2), $\Theta_L$ is jointly concave, therefore \cref{prop:unified lifting} proves the claim.\\
    (2) Set $\Lambda_I(X,Y)=-\sum_{i\in I}\lambda_i^\downarrow\big(T(X,Y)\big)$.\\
    Region two: If $0\leq p,q\leq 1;\:s\leq0$: Set $M(X,Y):=A(X)^\frac{1}{2}$,  $Z(X,Y):=A(X)^\frac{1}{2}B(Y)A(X)^\frac{1}{2}$ and $\varphi(x_1,\ldots,x_r)=-\sum_j x_j^s$, then $\Theta_L(X,Y)=-\tr\left[\left((V^*A(X)^{-1}V)^{-\frac{1}{2}}V^*B(Y)V(V^*A(X)^{-1}V)^{-\frac{1}{2}}\right)^s\right]$. Since $0\leq p,q\leq 1$, both $A$ and $B$ are operator concave, then $X\mapsto(V^*A(X)^{-1}V)^{-1}$ is operator concave, so $\Theta_L$ is jointly concave by \cref{lem:engine I}, therefore \cref{prop:unified lifting} proves the claim.

    If $-1\leq p,q\leq 0;\:s\geq0$: Set $M(X,Y):=A(X)^{-\frac{1}{2}}$,  $Z(X,Y):=(A(X)^\frac{1}{2}B(Y)A(X)^\frac{1}{2})^{-1}$ and $\varphi(x_1,\ldots,x_r)=-\sum_j x_j^{-s}$, then $\Theta_L(X,Y)=-\tr\left[\left((V^*A(X)V)^{-\frac{1}{2}}V^*B(Y)^{-1}V(V^*A(X)V)^{-\frac{1}{2}}\right)^{-s}\right]$. Since $-1\leq p,q\leq 0$, both $A^{-1}$ and $B^{-1}$ are operator concave, then $X\mapsto(V^*A(X)V)^{-1}$ is operator concave, so $\Theta_L$ is jointly concave by \cref{lem:engine I}, therefore \cref{prop:unified lifting} proves the claim.\\
    \\
    Region three: Assume $-1\leq p\leq 0;\:1\leq q\leq2;\:s\geq\frac{1}{p+q}$, the other region is symmetric. Set $M(X,Y):=A(X)^\frac{1}{2}$,  $Z(X,Y):=A(X)^\frac{1}{2}B(Y)A(X)^\frac{1}{2}$ and $\varphi(x_1,\ldots,x_r)=-\sum_j x_j^s$, then\\ $\Theta_L(X,Y)=-\tr\left[\left((V^*\Phi(X^p)^{-1}V)^{-\frac{1}{2}}V^*\Psi(Y^q)V(V^*\Phi(X^p)^{-1}V)^{-\frac{1}{2}}\right)^s\right]$ is jointly concave by \cref{lem:engine II}, therefore \cref{prop:unified lifting} proves the claim.
\end{proof}

\begin{rem}
    At full rank $r=d$, this generalizes the joint concavity / convexity in \cite[Theorem 2.1]{Hiai2016ConcavityII} in the case $f(x)=x^s$.
\end{rem}

\begin{cor}
    Given invertible matrix $M$, let $T(X,Y):=(Y^\frac{q}{2}M^\dagger X^pMY^\frac{q}{2})^s$. For every $X_i,Y_i\in\bP_d$ and every probability distribution $\{\alpha_t:1\leq t\leq n\}$:\\
    (1) If $(p,q,s)$ is in Region one, then for every Horn tuple $(I_1,...,I_n;K)$ in dimension $d$,
    \begin{align}
        \sum_{k\in K}\lambda^\uparrow_k\big(T(\sum_{t=1}^{n}\alpha_tX_t,\sum_{t=1}^{n}\alpha_tY_t)\big)\geq\sum_{t=1}^{n}\alpha_t\sum_{i_t\in I_t}\lambda^\uparrow_{i_t}\big(T(X_t,Y_t)\big).
    \end{align}
    In particular, at rank $r=d$, this recovers the Carlen-Frank-Lieb's joint concavity \cite{CarlenFrankLieb2016SomeOperatorTraceConvexity} of $\tr\:T$.\\
    (2) If $(p,q,s)$ is in Region two or three, then for every Horn tuple $(I_1,...,I_n;K)$ in dimension $d$,
    \begin{align}
        \sum_{k\in K}\lambda^\downarrow_k\big(T(\sum_{t=1}^{n}\alpha_tX_t,\sum_{t=1}^{n}\alpha_tY_t)\big)\leq\sum_{t=1}^{n}\alpha_t\sum_{i_t\in I_t}\lambda^\downarrow_{i_t}\big(T(X_t,Y_t)\big).
    \end{align}
    In particular, at rank $r=d$, this recovers Zhang's joint convexity \cite{Zhang2020WYDCarlenFrankLieb} of $\tr\:T$.
\end{cor}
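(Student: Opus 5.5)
The corollary should follow from \cref{theorem: Two-variable positive-map powers} by a congruence and a choice of positive maps. First I need to match $T$ with the theorem's $\big(\Phi(X^p)^{1/2}\Psi(Y^q)\Phi(X^p)^{1/2}\big)^s$. Two facts do this: $\lambda(WW^\dagger)=\lambda(W^\dagger W)$ for any square $W$, and $(WW^\dagger)^s$ and $(W^\dagger W)^s$ are unitarily conjugate on positive definite matrices. Hence the eigenvalue lists of $T(X,Y)=(Y^{q/2}M^\dagger X^pMY^{q/2})^s$ coincide with those of
\begin{align}
\tilde T(X,Y):=\bigl((M^\dagger X^pM)^{\frac12}\,Y^q\,(M^\dagger X^pM)^{\frac12}\bigr)^s .
\end{align}
To see this, take $W=Y^{q/2}(M^\dagger X^pM)^{1/2}$. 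Then $WW^\dagger=Y^{q/2}M^\dagger X^pMY^{q/2}$ and $W^\dagger W=(M^\dagger X^pM)^{1/2}Y^q(M^\dagger X^pM)^{1/2}$. Since $M$ is invertible and $X,Y\in\bP_d$, both products are positive definite, so $x\mapsto x^s$ is defined for negative $s$ as well.

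Next I set $\Phi(Z):=M^\dagger ZM$ and $\Psi:=\id$. Both are positive linear maps $\bP_d\to\bP_d$; $\Phi$ lands in $\bP_d$ because $M$ is invertible. With these choices $\tilde T$ is exactly the $T$ of \cref{theorem: Two-variable positive-map powers}. The Horn inequalities in (1) and (2) involve only the eigenvalues of $T$ evaluated at $(\sum_t\alpha_tX_t,\sum_t\alpha_tY_t)$ and at each $(X_t,Y_t)$. Each of these eigenvalue lists equals the corresponding list for $\tilde T$, so parts (1) and (2) transfer verbatim from the theorem, region by region. In part (1) I use $\lambda^\uparrow$ and in part (2) $\lambda^\downarrow$. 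The unitary-orbit versions then follow again by \cref{theorem: Fulton's Horn inequalities}.

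For the rank-$d$ remarks I take $r=d$. Then $K=I_t=[d]$, and $([d],\dots,[d];[d])$ is trivially a Horn tuple because the Schubert conditions reduce to $L=\bC^d$. The inequality in (1) then reads
\begin{align}
\tr T\Bigl(\sum_t\alpha_tX_t,\sum_t\alpha_tY_t\Bigr)\ge\sum_t\alpha_t\tr T(X_t,Y_t),
\end{align}
which is joint concavity of $\tr T=\Psi^{M}_{p,q,s}$ in the sense of \cref{thm: Zhang's concavity}. Part (2) gives joint convexity in the same way.

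The only real subtlety is checking that the hypotheses of the theorem hold: the positivity and invertibility needed so that $\Phi(X^p)^{\pm1/2}$ and the negative powers are well defined. This is ensured by invertibility of $M$. I also need to check that the isospectrality step is valid for every $s$ in all three regions, including $s<0$; it is, because both $WW^\dagger$ and $W^\dagger W$ are positive definite.
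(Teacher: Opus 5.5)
Your proposal is correct and matches the paper's implicit argument: the paper states the corollary without proof, as a direct specialization of \cref{theorem: Two-variable positive-map powers} with $\Phi(Z)=M^\dagger ZM$ and $\Psi=\id$ (both strictly positive since $M$ is invertible), and then takes $K=I_t=[d]$ at full rank. Your isospectrality step via $W=Y^{q/2}(M^\dagger X^pM)^{1/2}$ is valid for every $s$. Alternatively, because all three regions are symmetric under $p\leftrightarrow q$, you could let $\Phi=\id$ act on $Y^q$ and $\Psi=M^\dagger(\cdot)M$ act on $X^p$, which matches $T$ exactly without that step.
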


\section{Geometric means and Multiplicative Horn families}
In this section, we consider geometric means. The scalarizer is, for each $L\in\Gr(r,d)$,
\begin{align}
    \Theta_L(X):=\det(V_L^\dagger X^{-1}V_L)^{-\frac{1}{r}}.
\end{align}
We first prove the Hersch-Zwahlen formula and analytic inputs needed for this family.

\begin{lem}[Multiplicative Hersch-Zwahlen] \label{lemma: multiplicative Hersch-Zwahlen}
    For any $X\in\bP_d$ and $I\subset[d]$,
    \begin{align}
        \big(\prod_{i\in I}\lambda_i^\uparrow(X)\big)^\frac{1}{r}=\max_{L\in\Omega_I(F_\bullet^\uparrow(X))}\Theta_L(X)=\min_{L\in\Omega_{I^\vee}(F_\bullet^\downarrow(X))}\Theta_L(X).
    \end{align}
\end{lem}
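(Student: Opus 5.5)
The plan is to derive the lemma directly from \cref{theorem: generalized Hersch-Zwahlen}, part (2). We apply it to the inverted matrix $X^{-1}$ with $M=\one$ and a coordinatewise nonincreasing $\varphi$. The key observation is that the scalarizer $\Theta_L(X)=\det(V_L^\dagger X^{-1}V_L)^{-1/r}$ is exactly $\Theta_{X^{-1},\varphi}(L)$ for
\begin{align}
\varphi(x_1,\ldots,x_r):=\Big(\prod_{j=1}^r x_j\Big)^{-\frac{1}{r}}.
\end{align}
Indeed, $\det\big((X^{-1})_L\big)$ is the product of the eigenvalues of the compression $(X^{-1})_L$. On the positive orthant, $\varphi$ is coordinatewise nonincreasing. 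Since $X^{-1}\in\bP_d$, every compression $(X^{-1})_L$ is positive definite, so only positive arguments of $\varphi$ ever occur. If needed, $\varphi$ can be extended arbitrarily but monotonically off the orthant without affecting anything.

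Next we translate the eigen-data of $X^{-1}$ into that of $X$. First, $\lambda_i^\uparrow(X^{-1})=1/\lambda^\uparrow_{d+1-i}(X)$. Second, if we take the flags built from a common ordered eigenbasis, then
\begin{align}
F_\bullet^\uparrow(X^{-1})=F_\bullet^\downarrow(X),\qquad F_\bullet^\downarrow(X^{-1})=F_\bullet^\uparrow(X).
\end{align}
With repeated eigenvalues the eigenflags are not unique, so we fix them this way, compatibly for $X$ and $X^{-1}$.

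We then apply \cref{theorem: generalized Hersch-Zwahlen}(2) to $X^{-1}$ with the index set $J:=I^\vee$. Since $(I^\vee)^\vee=I$, this gives
\begin{align}
\varphi\big(\lambda^\uparrow_{J}(X^{-1})\big)=\min_{L\in\Omega_{I^\vee}(F^\uparrow_\bullet(X^{-1}))}\Theta_{X^{-1},\varphi}(L)=\max_{L\in\Omega_{I}(F^\downarrow_\bullet(X^{-1}))}\Theta_{X^{-1},\varphi}(L).
\end{align}
Substituting the flag identities turns the index sets of the two extrema into $\Omega_{I^\vee}(F^\downarrow_\bullet(X))$ for the minimum and $\Omega_I(F^\uparrow_\bullet(X))$ for the maximum, as the lemma requires.

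For the left-hand side, write $J=\{d+1-i: i\in I\}$. Then
\begin{align}
\prod_{j\in J}\lambda^\uparrow_j(X^{-1})=\prod_{i\in I}\lambda^\uparrow_i(X)^{-1},
\end{align}
so $\varphi(\lambda^\uparrow_J(X^{-1}))=\big(\prod_{i\in I}\lambda_i^\uparrow(X)\big)^{1/r}$, which is the desired expression.

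The main obstacle is bookkeeping rather than analysis. It lies in getting the dualities right: the passage $I\leftrightarrow I^\vee$, the switch between increasing and decreasing flags under inversion, and the reversal of $\max$ and $\min$ caused by $\varphi$ being nonincreasing. All three must line up so that the maximum lands on $\Omega_I(F^\uparrow_\bullet(X))$ and the minimum on $\Omega_{I^\vee}(F^\downarrow_\bullet(X))$. A direct sanity check is the attainment plane $L=\Span\{u_i:i\in I\}$, where $u_1,\ldots,u_d$ is the ordered eigenbasis of $X$. This plane lies in both Schubert cells. On it, $V_L^\dagger X^{-1}V_L$ is diagonal with entries $\lambda_i^\uparrow(X)^{-1}$ for $i\in I$, so $\Theta_L(X)$ equals the geometric mean on the left-hand side.
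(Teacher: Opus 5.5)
Your proposal is correct and is essentially the paper's own proof. Both apply \cref{theorem: generalized Hersch-Zwahlen}(2) to $X^{-1}$ with $M=\one$, the nonincreasing $\varphi(x)=(x_1\cdots x_r)^{-1/r}$ and index set $I^\vee$, then use $\Theta_{X^{-1},\varphi}(L)=\Theta_L(X)$ together with the flag swap $F_\bullet^\uparrow(X^{-1})=F_\bullet^\downarrow(X)$ and $F_\bullet^\downarrow(X^{-1})=F_\bullet^\uparrow(X)$. One aside needs correcting: $\varphi$ cannot be extended to a real-valued coordinatewise nonincreasing function on all of $\bR^r$, because it blows up at the boundary of the orthant. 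No extension is needed, however, since the proof of \cref{theorem: generalized Hersch-Zwahlen} only compares eigenvalue vectors of positive definite compressions, where monotonicity on $(0,\infty)^r$ suffices.
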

\begin{proof}
    Applying \cref{theorem: generalized Hersch-Zwahlen} to the coordinatewise nonincreasing $\varphi(x_1,\ldots,x_r):=(x_1\ldots x_r)^{-\frac{1}{r}}$ we get
    \begin{align}
        \big(\prod_{j\in J}\lambda_j^\uparrow(X)\big)^\frac{1}{r}=\varphi(\lambda_{j_1}^\uparrow(X^{-1}),\ldots,\lambda_{j_r}^\uparrow(X^{-1}))=\max_{L\in\Omega_{J^\vee}(F_\bullet^\downarrow(X^{-1}))}\Theta_{X^{-1},\varphi}(L)=\min_{L\in\Omega_J(F_\bullet^\uparrow(X^{-1}))}\Theta_{X^{-1},\varphi}(L)
    \end{align}
    where $I=J^\vee$, which is exactly the claim since $\Theta_{X^{-1},\varphi}=\Theta_L(X)$.
\end{proof}

\begin{lem} \label{lemma: properties of Theta_L}
    (1) For any $X,Y\in\bP_d$,
    \begin{align}
        \Theta_L(X\#_\alpha Y)\geq\Theta_L(X)^{1-\alpha}\Theta_L(Y)^\alpha.
    \end{align}
    (2) $\Theta_L$ is concave on $\bP_d$.\\
    (3) For every $L\in\Gr(r,d)$,
    \begin{align}
        \Theta_L\big(G_n(w;X_1,...,X_n)\big)\geq\prod_{t=1}^{n}\Theta_L(X_t)^{w_t}.
    \end{align}
\end{lem}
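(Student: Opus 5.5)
Write $V=V_L$, so $\Theta_L(X)=\det\bigl(V^\dagger X^{-1}V\bigr)^{-1/r}$. The plan is to reduce all three parts to known facts about the compression map $X\mapsto (V^\dagger X^{-1}V)^{-1}$, which is the shorted operator, and about $\det^{1/r}$.

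\emph{Part (1).} Set $\Phi_V(Y):=V^\dagger YV$. This is a unital positive linear map $\bM_d\to\bM_r$, since $V^\dagger V=\one_r$. Using $(X\#_\alpha Y)^{-1}=X^{-1}\#_\alpha Y^{-1}$ and Ando's inequality (\cref{thm: Ando's inequality}), we get
\begin{align}
V^\dagger (X\#_\alpha Y)^{-1}V=\Phi_V(X^{-1}\#_\alpha Y^{-1})\le \Phi_V(X^{-1})\#_\alpha\Phi_V(Y^{-1}).
\end{align}
Since $\det$ is monotone on positive definite matrices, and $\det(A\#_\alpha B)=\det(A)^{1-\alpha}\det(B)^\alpha$, it follows that
\begin{align}
\det\bigl(V^\dagger (X\#_\alpha Y)^{-1}V\bigr)\le \det(V^\dagger X^{-1}V)^{1-\alpha}\det(V^\dagger Y^{-1}V)^{\alpha}.
\end{align}
Raising both sides to the power $-1/r$ reverses the inequality and gives (1).

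\emph{Part (2).} By \cref{lem: order}(1) with $p=1$ and $\Phi=\Phi_V$ (after perturbing to a strictly positive map if needed), $X\mapsto (V^\dagger X^{-1}V)^{-1}$ is operator concave and operator increasing. Alternatively, one can reuse the infimum representation $(V^\dagger X^{-1}V)^{-1}=\inf_{V^\dagger W=\one}W^\dagger XW$ from the proof of \cref{lem:shorted-epstein}, which is an infimum of affine maps. Then $\Theta_L(X)=\det\bigl((V^\dagger X^{-1}V)^{-1}\bigr)^{1/r}$. Here $\det^{1/r}$ is concave on $\bP_r$ (Minkowski) and monotone, so composing it with an operator concave map gives a concave function. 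Alternatively, (2) follows from (1) with $\alpha=1/2$: combine $\Theta_L(X\#Y)\ge\sqrt{\Theta_L(X)\Theta_L(Y)}$ with $X\#Y\le (X+Y)/2$ and monotonicity of $\Theta_L$. That route only yields midpoint log-concavity, however, so I would use the composition argument.

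\emph{Part (3).} This is the substantive step. By (1), $\log\Theta_L$ is geodesically concave on the Hadamard space $(\bP_d,\delta)$, because the geodesics there are exactly $t\mapsto X\#_tY$. Moreover $\Theta_L$ is continuous and positive on $\bP_d$. Applying \cref{thm:jensen for barycenters} to $f=\log\Theta_L$ and $\mu=\sum_tw_t\delta_{X_t}$, whose barycenter is $G_n(w;X_1,\dots,X_n)$ by \eqref{eq:karcher-barycenter}, gives
\begin{align}
\log\Theta_L(G_n)\ge\sum_tw_t\log\Theta_L(X_t).
\end{align}
Exponentiating gives (3).

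The main obstacle is the correct use of monotonicity and inversion in parts (1) and (2), in particular making sure Ando's inequality is applied to the inverses with a unital compression. The barycentric Jensen step only needs to check that (1) is exactly geodesic concavity of $\log\Theta_L$.
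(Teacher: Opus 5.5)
Your proof is correct. Parts (1) and (3) are essentially the paper's argument: Ando's inequality for the unital compression applied to $X^{-1}\#_\alpha Y^{-1}$, then geodesic concavity of $\log\Theta_L$ combined with Jensen's inequality for barycenters.

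Part (2) takes a genuinely different route. You compose the operator concave, operator increasing map $X\mapsto(V^\dagger X^{-1}V)^{-1}$ with the monotone concave function $\det^{1/r}$ (Minkowski). The paper instead derives (2) from (1). It combines (1) for arbitrary $\alpha$ with $X\#_\alpha Y\le(1-\alpha)X+\alpha Y$ and the monotonicity of $\Theta_L$, which gives the weighted log-concavity $\Theta_L((1-\alpha)X+\alpha Y)\ge\Theta_L(X)^{1-\alpha}\Theta_L(Y)^\alpha$. It then uses that $\Theta_L$ is homogeneous of degree one to upgrade log-concavity to concavity, by rescaling to $X/\Theta_L(X)$ and $Y/\Theta_L(Y)$ and reweighting.

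The paper's route needs nothing beyond what (1) already uses. In effect it reproves a Minkowski-type inequality for $\Theta_L$ by the same trick that usually proves Minkowski's determinant inequality. Your route is more modular: it gives concavity for any monotone concave functional of $(V^\dagger X^{-1}V)^{-1}$, not just $\det^{1/r}$.

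Your dismissal of the (1)-based route is inaccurate, though. Statement (1) holds for every $\alpha\in[0,1]$, so that route gives full weighted log-concavity, not merely midpoint log-concavity. The ingredient actually missing from your sketch of it is the homogeneity argument needed to pass from log-concavity to concavity.
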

\begin{proof}
    (1) Denote $\Phi_L(X):=V_L^\dagger XV_L$, then $\Theta_L(X)=\det(\Phi_L(X^{-1}))^{-\frac{1}{r}}$, by Ando's inequality,
    \begin{align}
        \Theta_L(X\#_\alpha Y)=\det\Phi_L((X\#_\alpha Y)^{-1})^{-\frac{1}{r}}=\det\Phi_L(X^{-1}\#_\alpha Y^{-1})^{-\frac{1}{r}}&\geq\det[\Phi_L(X^{-1})\#_\alpha\Phi_L(Y^{-1})]^{-\frac{1}{r}}\\
        &=\Theta_L(X)^{1-\alpha}\Theta_L(Y)^\alpha.
    \end{align}
    (2) Since $X\#_\alpha Y\leq(1-\alpha)X+\alpha Y$, by operator-monotonicity of $\Theta_L$, 
    \begin{align}
        \Theta_L((1-\alpha)X+\alpha Y)\geq\Theta_L(X\#_\alpha Y)\geq\Theta_L(X)^{1-\alpha}\Theta_L(Y)^\alpha.
    \end{align}
    Set $a=\Theta_L(X)$, $b=\Theta_L(Y)$, $c=(1-\alpha)a+\alpha b$, $\tilde{a}=\frac{(1-\alpha)a}{c}$ and $\tilde{b}=\frac{\alpha b}{c}$. Since $\Theta_L$ is homogeneous, $\Theta_L(\frac{X}{a})=\Theta_L(\frac{Y}{b})=1$ and
    \begin{align}
        \Theta_L((1-\alpha)X+\alpha Y)=c\Theta_L(\frac{\tilde{a}}{a}X+\frac{\tilde{b}}{b}Y)\geq c\Theta_L(\frac{X}{a})^{\tilde{a}}\Theta_L(\frac{Y}{b})^{\tilde{b}}=c=(1-\alpha)\Theta_L(X)+\alpha\Theta_L(Y).
    \end{align}
    (3) $\log\Theta_L$ is geodesically concave by part (1), since $X\#_\alpha Y$ is the unique geodesic and Karcher mean is the corresponding barycenter, by \cref{thm:jensen for barycenters},
    \begin{align}
        \log\Theta_L(G_n(w;X_1,\ldots,X_n))\geq\sum_tw_t\log\Theta_L(X_t).
    \end{align}
\end{proof}

Now we are ready to prove the Horn inequalities:

\begin{thm} \label{theorem: Horn inequalities for geodesic means}
    For every $X_1,...,X_n\in\bP_d$ and every Horn tuple $(I_1,...,I_n;K)$ of rank $r$,
    \begin{align}
        \bigg(\prod_{k\in K}\lambda_k^\uparrow\big(\sigma_\nu(X_1,...,X_n)\big)\bigg)^\frac{1}{r}\geq\gamma_\nu\bigg(\big(\prod_{i_1\in I_1}\lambda_{i_1}^\uparrow(X_1)\big)^\frac{1}{r},...,\big(\prod_{i_n\in I_n}\lambda_{i_n}^\uparrow(X_n)\big)^\frac{1}{r}\bigg)
    \end{align}
    where
    \begin{align}
        \gamma_\nu(x_1,...,x_n):=\int_{\Delta_n}(\prod_{i=1}^{n}x_i^{w_i})d\nu(w).
    \end{align}
    In particular, at rank $r=d$, this generalizes \cite[Theorem 5.5]{BourinHiai2014JensenMinkowski}.
\end{thm}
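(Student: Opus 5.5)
The plan is to apply the abstract lifting principle, \cref{thm: lifting principle}, with the multiplicative scalarizer $\Theta_L(X)=\det(V_L^\dagger X^{-1}V_L)^{-1/r}$ and with
\begin{align}
    \Lambda_I(X)=\Bigl(\prod_{i\in I}\lambda_i^\uparrow(X)\Bigr)^{1/r}.
\end{align}
The $n$-tuple operation is $\Phi=\sigma_\nu$, the flags are $F^+_\bullet(X)=F^\uparrow_\bullet(X)$ and $F^-_\bullet(X)=F^\downarrow_\bullet(X)$, and the comparison function is $\Gamma=\gamma_\nu$.

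First I verify the standing hypotheses of \cref{thm: lifting principle}. The map $\gamma_\nu$ is nondecreasing in each variable on $(0,\infty)^n$, because each integrand $\prod_i x_i^{w_i}$ with $w_i\ge0$ is nondecreasing in each $x_i$ and $\nu$ is a positive measure. Axiom (L1) is exactly \cref{lemma: multiplicative Hersch-Zwahlen}. Its max/min characterizations give $\Theta_L(X)\le\Lambda_I(X)$ for $L\in\Omega_I(F^\uparrow_\bullet(X))$ and $\Theta_L(X)\ge\Lambda_I(X)$ for $L\in\Omega_{I^\vee}(F^\downarrow_\bullet(X))$.

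The substantive step is (L2):
\begin{align}
    \Theta_L\bigl(\sigma_\nu(X_1,\ldots,X_n)\bigr)\ge\gamma_\nu\bigl(\Theta_L(X_1),\ldots,\Theta_L(X_n)\bigr)\quad\text{for every }L\in\Gr(r,d).
\end{align}
I will prove it in two stages. By definition, $\sigma_\nu=\int_{\Delta_n}G_n(w;X_1,\ldots,X_n)\,d\nu(w)$ is an average of Karcher means. \cref{lemma: properties of Theta_L}(2) says $\Theta_L$ is concave, and it is also continuous on $\bP_d$. Jensen's inequality for the probability measure $\nu$ then gives
\begin{align}
    \Theta_L(\sigma_\nu)\ge\int_{\Delta_n}\Theta_L\bigl(G_n(w;X_1,\ldots,X_n)\bigr)\,d\nu(w).
\end{align}
For this I would either approximate $\nu$ by finitely supported measures and pass to the limit by continuity, or apply a supporting-hyperplane argument to the concave function $\Theta_L$. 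Next, \cref{lemma: properties of Theta_L}(3) bounds the integrand pointwise by $\prod_t\Theta_L(X_t)^{w_t}$. Integrating this bound against $\nu$ yields exactly $\gamma_\nu(\Theta_L(X_1),\ldots,\Theta_L(X_n))$, which is (L2).

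With the axioms in place, I take a point $L$ in the nonempty intersection
\begin{align}
    \Omega_{I_1^\vee}\bigl(F^\downarrow_\bullet(X_1)\bigr)\cap\cdots\cap\Omega_{I_n^\vee}\bigl(F^\downarrow_\bullet(X_n)\bigr)\cap\Omega_K\bigl(F^\uparrow_\bullet(\sigma_\nu)\bigr).
\end{align}
The proof of \cref{thm: lifting principle} then delivers the chain
\begin{align}
    \Lambda_K(\sigma_\nu)\ge\Theta_L(\sigma_\nu)\ge\gamma_\nu\bigl(\Theta_L(X_t)\bigr)\ge\gamma_\nu\bigl(\Lambda_{I_t}(X_t)\bigr),
\end{align}
which is the claimed inequality.

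For the remark at $r=d$: only $I_t=K=[d]$ is possible, and $\Lambda_{[d]}=\det^{1/d}$. The statement then becomes $\det(\sigma_\nu)^{1/d}\ge\int\prod_t\det(X_t)^{w_t/d}\,d\nu$, which is the Bourin--Hiai determinant inequality.

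The main obstacle I anticipate is the measure-theoretic justification of Jensen's inequality for $\nu$ in the first stage. This requires measurability and continuity of $w\mapsto G_n(w;X_1,\ldots,X_n)$, which I would take from continuity of the barycenter in the weights, together with boundedness on the compact simplex so that the Bochner integral defining $\sigma_\nu$ makes sense. Once these are in hand, concavity and continuity of $\Theta_L$ give the integral Jensen inequality by a standard approximation argument.
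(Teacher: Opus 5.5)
Your proposal is correct and follows the paper's proof essentially step for step. You obtain (L1) from the multiplicative Hersch--Zwahlen lemma, obtain (L2) from concavity of $\Theta_L$ (integral Jensen over $\nu$) followed by the barycentric bound of \cref{lemma: properties of Theta_L}(3), and then invoke the lifting principle; your explicit check that $\gamma_\nu$ is coordinatewise nondecreasing and your justification of the integral Jensen step (continuity and boundedness of $w\mapsto G_n(w;X_1,\ldots,X_n)$ on the compact simplex) supply details the paper leaves implicit.
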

\begin{proof}
    Set $\Phi:=\sigma_\nu$, $\Gamma:=\gamma_\nu$ and $\Lambda_I(X):=(\prod_{i\in I}\lambda_i^\uparrow(X))^\frac{1}{r}$. Since $\Theta_L$ is concave by \cref{lemma: properties of Theta_L}(2), 
    \begin{align}
        \Theta_L(\Phi(X_1,\ldots,X_n))=\Theta_L(\int_{\Delta_n}G_n(w;X_1,...,X_n) d\nu)\geq\int_{\Delta_n}\Theta_L(G_n(w;X_1,...,X_n)) d\nu.
    \end{align}
    By \cref{lemma: properties of Theta_L}(3),
    \begin{align}
        \int_{\Delta_n}\Theta_L(G_n(w;X_1,...,X_n)) d\nu\geq\int_{\Delta_n}\prod_{t=1}^{n}\Theta_L(X_t)^{w_t} d\nu=\gamma_\nu(\Theta_L(X_1),\ldots,\Theta_L(X_n)),
    \end{align}
    so (L2) of \cref{thm: lifting principle} is satisfied; and \cref{lemma: multiplicative Hersch-Zwahlen} shows (L1), therefore \cref{thm: lifting principle} applies and
    \begin{align}
        \bigg(\prod_{k\in K}\lambda_k^\uparrow\big(\sigma_\nu(X_1,...,X_n)\big)\bigg)^\frac{1}{r}=\Lambda_K(\Phi(X_1,\ldots,X_n))&\geq\Gamma(\Lambda_{I_1}(X_1),\ldots,\Lambda_{I_n}(X_n))\\
        &=\gamma_\nu\bigg(\big(\prod_{i_1\in I_1}\lambda_{i_1}^\uparrow(X_1)\big)^\frac{1}{r},\dots,\big(\prod_{i_n\in I_n}\lambda_{i_n}^\uparrow(X_n)\big)^\frac{1}{r}\bigg).
    \end{align}
\end{proof}

\begin{cor}
    Taking $\nu=\delta_w$, we get 
    \begin{align}
        \prod_{k\in K}\lambda_k^\uparrow\big(G_n(w;X_1,...,X_n)\big)\geq\prod_{t=1}^{n}\big(\prod_{i_t\in I_t}\lambda_{i_t}^\uparrow(X_t)\big)^{w_t}.
    \end{align}
    Equivalently, for every $X_1,...,X_n\in\bP_d$, there exist unitaries $U_1,...,U_n$ such that
    \begin{align}
        \log\big(G_n(w;X_1,...,X_n)\big)\geq\sum_tw_tU_t\log(X_t)U_t^\dagger.
    \end{align}
    Since $\log\det(G_n)=\sum_t w_t\log\det(X_t)$, the equality holds. In particular, for every $X,Y\in\bP_d$, there exist unitaries $U,V$ such that
    \begin{align}
        \log(X\#_\alpha Y)=(1-\alpha)U\log(X)U^\dagger+\alpha V\log(Y)V^\dagger.
    \end{align}
\end{cor}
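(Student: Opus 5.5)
The corollary has three parts: the multiplicative inequality, its translation into a unitary-orbit inequality for logarithms, and the upgrade to equality. I would prove them in that order.

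\emph{Multiplicative inequality.} Specialize \cref{theorem: Horn inequalities for geodesic means} to $\nu=\delta_w$. Then $\sigma_\nu(X_1,\ldots,X_n)=G_n(w;X_1,\ldots,X_n)$ and $\gamma_\nu(x_1,\ldots,x_n)=\prod_t x_t^{w_t}$. Raising both sides to the $r$-th power gives the stated product inequality for every Horn tuple $(I_1,\ldots,I_n;K)$.

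\emph{Logarithmic form.} Take $\log$ of the product inequality and use $\log\lambda_k^\uparrow(X)=\lambda_k^\uparrow(\log X)$. This gives
\begin{align}
\sum_{k\in K}\lambda_k^\uparrow(\log G_n)\geq\sum_t w_t\sum_{i\in I_t}\lambda_i^\uparrow(\log X_t)
\end{align}
for all Horn tuples. \cref{theorem: Fulton's Horn inequalities} is phrased with decreasing eigenvalues and $C\le\sum A_t$, so I apply it to $C:=-\log G_n$ and $A_t:=-w_t\log X_t$. The decreasing eigenvalues of $-Y$ are $-\lambda^\uparrow(Y)$, with the same index sets, and for $w_t\ge0$ we have $\lambda_i^\downarrow(-w_tY)=-w_t\lambda_i^\uparrow(Y)$. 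Hence the displayed family is exactly condition (2) of \cref{theorem: Fulton's Horn inequalities}.

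Condition (1) then yields Hermitian matrices with these spectra satisfying $C'\le\sum_t A_t'$. Conjugating so that $C'=C$ and writing $A_t'=U_tA_tU_t^\dagger$, then negating, gives $\log G_n\geq\sum_t w_tU_t\log(X_t)U_t^\dagger$. The converse direction is the other implication of \cref{theorem: Fulton's Horn inequalities}, which establishes the stated equivalence.

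\emph{Equality.} Taking determinants in the Karcher mean gives $\log\det G_n=\sum_tw_t\log\det X_t$, using the determinant identity recorded for $\#_t$ extended to barycenters. Taking traces of the operator inequality, the difference $D:=\log G_n-\sum_t w_tU_t\log(X_t)U_t^\dagger\ge0$ then has $\tr D=\log\det G_n-\sum_tw_t\log\det X_t=0$. Hence $D=0$. For $n=2$ with $w=(1-\alpha,\alpha)$ this is the stated formula for $\log(X\#_\alpha Y)$.

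\emph{Main obstacle.} The only step not routine is justifying $\det G_n(w;\cdot)=\prod_t\det X_t^{w_t}$ for the Karcher mean. I would first try applying \cref{lemma: properties of Theta_L}(3) at $r=d$. There $\Theta_L=\det^{1/d}$ for the unique $L=\bC^d$, which gives $\ge$. Applying the same argument to the inverses, using that inversion is a $\delta$-isometry mapping $G_n(w;X)$ to $G_n(w;X^{-1})$, gives $\le$.
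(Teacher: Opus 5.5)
Your proposal is correct and follows the same route the paper takes: specialize \cref{theorem: Horn inequalities for geodesic means} to $\nu=\delta_w$, pass to logarithms and invoke \cref{theorem: Fulton's Horn inequalities}, then upgrade to equality by the trace argument. Your sign flip to $C=-\log G_n$, $A_t=-w_t\log X_t$ correctly matches the increasing-eigenvalue form to Fulton's decreasing convention, and your justification of $\det G_n=\prod_t\det X_t^{w_t}$ is valid; it uses \cref{lemma: properties of Theta_L}(3) at $r=d$ together with the inversion symmetry $G_n(w;X_1^{-1},\ldots,X_n^{-1})=G_n(w;X_1,\ldots,X_n)^{-1}$, and fills in a step the paper only asserts.
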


\section{Conclusion}

In this article, we developed a two-stage framework for lifting majorization inequalities to complete Horn inequalities. The geometric stage is provided by the generalized Hersch--Zwahlen variational formula in \cref{theorem: generalized Hersch-Zwahlen}, which represents coordinatewise monotone functions of arbitrary selected eigenvalue sublists as extrema of scalarizers over Schubert-constrained compressions. Combining this formula with the intersection property of Horn tuples yields the lifting principle in \cref{prop:unified lifting}. The remaining step is family-specific and analytic: one verifies the appropriate concavity, convexity, or mean inequality for the compressed scalarizer. This separation isolates the common geometric mechanism behind the lifting and provides a reusable route from majorization inequalities to the complete Horn system and, in the additive setting, to unitary-orbit inequalities.

We then applied this framework to the three families motivating the article. The convex-function result in \cref{theorem: Aujla-Silva Horn} upgrades the Aujla--Silva weak-majorization inequality to complete Horn and unitary-orbit Jensen inequalities. The positive-power result in \cref{theorem: Two-variable positive-map powers} gives selected-eigenvalue and unitary-orbit extensions of the Carlen--Frank--Lieb--Zhang concavity and convexity theory. The multiplicative result in \cref{theorem: Horn inequalities for geodesic means} establishes complete Horn inequalities for multivariable geodesic means and, for Karcher means, yields unitary-orbit inequalities after taking logarithms. At full rank, these results recover the corresponding trace- or determinant-level statements, while the intermediate-rank inequalities retain spectral information that is invisible to scalarizations.

For quantum information theory, the principal gain is that the additive Horn extensions are equivalent to matrix-valued comparisons on unitary orbits. Such inequalities can be passed through positive maps or quantum channels and tested against observables before taking traces, entropies, or divergences. They may therefore serve as operator-level antecedents or refinements of scalar quantum-information inequalities. More broadly, the framework suggests a systematic route to further examples: identify a scalarizer adapted to the spectral quantity of interest, establish its behavior under compression, and then invoke the lifting principle to obtain complete Horn and, when applicable, unitary-orbit inequalities.

\section*{Acknowledgement and Disclosure of AI usage}
\paragraph*{Acknowledgement} We acknowledge helpful discussions with Mohammad Alhejji, Shiliang Gao, Felix Leditzky and Haonan Zhang. This work was supported by a grant through the IBM-Illinois Discovery Accelerator Institute as well as National Science Foundation Grant No. 2426103.

\paragraph*{Disclosure of AI usage}
The main idea and overall structure of this article were established in early 2026. The literature search and the refinement and polishing of the proofs have been assisted by GPT-5.2 and subsequent models. The author has reviewed and verified the entire article and assumes full responsibility for its accuracy and correctness.

\printbibliography

\end{document}